\definecolor{verylight}{gray}{0.97}
\definecolor{light}{gray}{0.9}
\definecolor{medium}{gray}{0.85}
\definecolor{dark}{gray}{0.6}
\def\NZQ{\mathbb}               
\def\NN{{\NZQ N}}
\def\ZZ{{\NZQ Z}}
\renewcommand{\qedsymbol}{$\square$} 
\def\G{{\mathcal G}}
\def\D{{\mathcal D}}
\def\ab{{\mathbf a}}
\def\bb{{\mathbf b}}
\def\wb{{\mathbf w}}
\def\0b{{\mathbf 0}}
\def\reg{{\mathbf reg}}
\def\height{\operatorname{ht}}
\def\depth{\operatorname{depth}}
\def\opn#1#2{\def#1{\operatorname{#2}}} 
\opn\chara{char} \opn\length{\ell} \opn\pd{pd} \opn\rk{rk}
\opn\projdim{proj\,dim} \opn\injdim{inj\,dim} \opn\rank{rank}
\opn\depth{depth} \opn\grade{grade} \opn\height{height}
\opn\embdim{emb\,dim} \opn\codim{codim}
\opn\Tr{Tr} \opn\bigrank{big\,rank}
\opn\superheight{superheight}\opn\lcm{lcm}
\opn\trdeg{tr\,deg}
	\opn\reg{reg} \opn\lreg{lreg} \opn\ini{in} \opn\lpd{lpd}
	\opn\size{size} \opn\sdepth{sdepth}
	\opn\link{link}\opn\fdepth{fdepth}\opn\lex{lex}
	\opn\tr{tr}
	\opn\type{type}
	\opn\gap{gap}
	\opn\arithdeg{arith-deg}
	\opn\HS{HS}
	\opn\GL{GL}
	\opn\div{div} \opn\Div{Div} \opn\cl{cl} \opn\Cl{Cl}
	\opn\Spec{Spec} \opn\Supp{Supp} \opn\supp{supp} \opn\Sing{Sing}
	\opn\Ass{Ass} \opn\Min{Min}\opn\Mon{Mon}
	\opn\Ann{Ann} \opn\Rad{Rad} \opn\Soc{Soc}\opn\Deg{Deg}
	\opn\Im{Im} \opn\Ker{Ker} \opn\Coker{Coker} \opn\Am{Am}
	\opn\Hom{Hom} \opn\Tor{Tor} \opn\Ext{Ext} \opn\End{End}
	\opn\Aut{Aut} \opn\id{id}
	\opn\nat{nat}
	\opn\pff{pf}
	\opn\Pf{Pf} \opn\GL{GL} \opn\SL{SL} \opn\mod{mod} \opn\ord{ord}
	\opn\Gin{Gin} \opn\Hilb{Hilb}\opn\sort{sort}
	\opn\PF{PF}\opn\Ap{Ap}
	\opn\mult{mult}
	\opn\bight{bight}
	\opn\aff{aff}
	\opn\relint{relint} \opn\st{st}
	\opn\lk{lk} \opn\cn{cn} \opn\core{core} \opn\vol{vol}  \opn\inp{inp} \opn\nilpot{nilpot}
	\opn\link{link} \opn\star{star}\opn\lex{lex}\opn\set{set}
	\opn\width{wd}
	\opn\Fr{F}
	\opn\QF{QF}
	\opn\G{G}
	\opn\type{type}\opn\res{res}
	\opn\conv{conv}
	\opn\Ind{Ind}
	\opn\gr{gr}
	\def\pot#1#2{#1[\kern-0.28ex[#2]\kern-0.28ex]}
	\opn\dirlim{\underrightarrow{\lim}}
	\opn\inivlim{\underleftarrow{\lim}}
	\let\to=\rightarrow
	\def\Implies{\ifmmode\Longrightarrow \else
		\unskip${}\Longrightarrow{}$\ignorespaces\fi}
	\def\implies{\ifmmode\Rightarrow \else
		\unskip${}\Rightarrow{}$\ignorespaces\fi}
	\def\iff{\ifmmode\Longleftrightarrow \else
		\unskip${}\Longleftrightarrow{}$\ignorespaces\fi}
	\newtheorem{Theorem}{Theorem}[section]
	\newtheorem{Lemma}[Theorem]{Lemma}
	\newtheorem{Corollary}[Theorem]{Corollary}
	\newtheorem{Example}[Theorem]{Example}
	\newtheorem{Definition}[Theorem]{Definition}
	\let\epsilon\varepsilon
	\let\kappa=\varkappa
	\def\qed{\ifhmode\textqed\fi
		\ifmmode\ifinner\quad\qedsymbol\else\dispqed\fi\fi}
	\def\textqed{\unskip\nobreak\penalty50
		\hskip2em\hbox{}\nobreak\hfil\qedsymbol
		\parfillskip=0pt \finalhyphendemerits=0}
	\def\dispqed{\rlap{\qquad\qedsymbol}}
	\opn\dis{dis}
	\def\pnt{{\raise0.5mm\hbox{\large\bf.}}}
	\opn\Lex{Lex}
\begin{document}
		\title {Cohen-Macaulayness of Powers of Edge Ideals of Weighted Oriented Graphs}

\author{Truong Thi Hien}
\address{Faculty of Natural Sciences,   Hong  Duc University,  No. 565 Quang Trung, Dong Ve, Thanh Hoa, Vietnam}
\email{hientruong86@gmail.com}

\author{Jiaxin Li}
\address{School of Mathematical Sciences, Soochow University, Suzhou, Jiangsu, 215006, P. R.~China}
\email{lijiaxinworking@163.com}

\author{Tran Nam Trung}
\address{Institute of Mathematics, Vietnam Academy of Science and Technology, 18 Hoang Quoc Viet, 10072 Hanoi, Vietnam}
\email{tntrung@math.ac.vn}

\author{Guangjun Zhu$^{\ast}$}
\address{School of Mathematical Sciences, Soochow University, Suzhou, Jiangsu, 215006, P. R.~China}
\email{zhuguangjun@suda.edu.cn}

		 \thanks{ * Corresponding author.}
		
\thanks{2020 {\em Mathematics Subject Classification}.  Primary 13C14, 13C05, 13C15; Secondary 05C25, 05E40}

\thanks{Keywords:   Cohen-Macaulayness,  symbolic power, ordinary power,  edge ideal, weighted oriented graph}

		
		

\begin{abstract} For the edge ideal $I(\D)$ of a weighted oriented graph $\D$, we prove that its symbolic powers $I(\D)^{(t)}$ are Cohen-Macaulay for all $t\geqslant 1$ if and only if the underlying graph $G$ is composed of a disjoint union of some complete graphs. We also completely characterize  the Cohen-Macaulayness of the ordinary powers $I(\D)^t$ for all $t\geqslant 2$. Furthermore,  we  provide a criterion for  determining whether $I(\D)^t=I(\D)^{(t)}$.
\end{abstract}
		\setcounter{tocdepth}{1}
		
		\maketitle
\section{Introduction}

An oriented graph $\D=(V(\D), E(\D))$ consists of a simple underlying graph $G$ in which each edge is oriented, i.e., it is a directed graph with no multiple edges or loops. The elements of $E(\D)$ are denoted by ordered pairs to reflect the orientation. For example,
$(u, v)$ represents an edge directed from $u$ to $v$. A {\it vertex-weighted} (or simply  weighted) oriented graph $\D$ is a graph  equipped with a weight function $\omega \colon V(\D)\to \ZZ_{>0}$. The pair  $(\D,\omega)$ is called a weighted oriented graph. When there is no confusion, we will simply use $\D$ to  represent  this pair.

Let $(\D,\omega)$ be a weighted oriented graph with an underlying graph $G$ and  a vertex set $V(\D)=\{1,2,\ldots,n\}$. Let $R=K[x_1, \ldots, x_n]$ be a polynomial ring  with $n$ variables  over a field $K$. The edge ideal of $\D$ is defined as
$$I(\D) = (x_ix_j^{\omega(j)} \mid (i,j)\in E(\D)).$$
In particular, if $\omega(j)=1$ for all $j\in V(\D)$, then $I(\D)=I(G)$.

The aim of this paper is to characterize the Cohen-Macaulayness of the symbolic powers of the edge ideal $I(\D)$ in terms of $\D$'s structure. Recall that the $t$-th symbolic power $I^{(t)}$ of an ideal $I$ in $R$ is defined as the intersection of the primary
components of $I^t$ associated with the minimal primes.

If $I$ is the Stanley-Reisner ideal of a simplicial complex $\Delta$, Terai and Trung \cite{TT} proved that $I^{(t)}$ is Cohen-Macaulay for some (or for all) $t \geqslant 3$ if and only if $\Delta$ is a matroid. In this paper, we investigate this property for the ideal $I(\D)$. However, $I(\D)$ is not square-free,  therefore,  we cannot directly apply the linear programming technique used to  study the Cohen-Macaulayness of $I(\D)^{(t)}$ as being done for  square-free monomial ideals (see, for example,  \cite{MT,TT}). Fortunately, $I(\D)$ has a nice primary decomposition, as shown in \cite{PRT}. Using the  Hochster formula for the depth of a monomial ideal, we can prove that if $I(\D)^{(t)}$ is Cohen-Macaulay for all $t\geqslant 1$, then the independence complex $\Delta(G)$ of the underlying graph $G$ of $\D$ is a matroid, and  $G$ is a disjoint union of complete graphs. The idea of the proof is as follows: If $\dim \Delta(G)=1$, then $\Delta(G)$ is a matroid.  We prove this by studying the integer solutions of  certain linear inequalities. For higher dimensions, we first prove that $\Delta(G)$ is locally a matroid and then use a result from \cite{TT} to show that $\Delta(G)$ is a matroid. Namely,

\medskip

\noindent {{\bf Theorem 1} (see Theorem \ref{cmPowers}). \it Let $\D$ be a weighted oriented graph with the underlying graph $G$. Then $I(\D)^{(t)}$ is Cohen-Macaulay for all $t\geqslant 1$ if and only if $G$ is a disjoint union of complete graphs.
}

\medskip

In contrast to square-free monomial ideals, as discussed in \cite{TT}, for every integer $m\geqslant 1$, there exists  a weighted oriented graph $\D$ with $4$ vertices such that $I(\D)^{(t)}$ is Cohen-Macaulay if and only if $t \leqslant m$ (see Example \ref{E4}).

\medskip

For ordinary powers,  Terai and Trung in \cite{TT} proved that if $I$ is a square-free monomial ideal, then $I^t$ is Cohen-Macaulay for some (or for all) $t \geqslant 3$ if and only if $I$ is a complete intersection. In fact, we fully  characterize  the Cohen-Macaulayness of $I(\D)^t$ for each $t\geqslant 2$. The basic tool is the criterion for the equality $I(\D)^t = I(\D)^{(t)}$ and it is also interesting in itself. This criterion was  obtained for the edge ideal $I(G)$ (see \cite{RTY}), and for $I(\D)$ with $t=2$ (see \cite{GMV}). We generalize these results to $I(\D)$ for any $t$. Before stating the result, we need to define some terms.  A {\em sink} vertex of $\D$ is a vertex with only incoming edges. A {\em source} vertex of $\D$ is a vertex with only outgoing  edges. In this paper, we always assume that if $v$ is a source  vertex, then $\omega(v) = 1$. This convention clearly  does not change the ideal $I(\D)$. Let $V^{+}(\D)=\{v\in V(D) \mid \omega(v) \geq 2 \}$. Then we have:

\medskip

\noindent {{\bf Theorem 2} (see Theorem \ref{equal}). Let $\D$ be a weighted oriented graph with the underlying graph $G$. For any $t\geqslant 2$, the following conditions are equivalent:
\begin{itemize}
\item[(1)] $I(\D)^t=I(\D)^{(t)}$.
\item[(2)] Every vertex in $V^{+}(\D)$ is a sink, and $G$ contains no odd cycles of length $2s - 1$ for any $2 \leqslant s \leqslant t$.
\end{itemize}
}

\medskip

This result plays a key role in characterizing the Cohen-Macaulayness of $I(\D)^t$ for any $t\geqslant 2$. In fact, by using it we can reduce our problem to studying the Cohen-Macaulayness of $I(G)^t$. The result for $t\geqslant 3$ is the following theorem.

\medskip

\noindent {{\bf Theorem 3} (see Theorem \ref{cmPowers}). Let $\D$ be a weighted oriented graph with the underlying graph $G$. Then the following conditions are equivalent:
\begin{itemize}
\item[(1)] $I(\D)^t$ is Cohen-Macaulay for every $t\geqslant 1$.
\item[(2)] $I(\D)^t$ is Cohen-Macaulay for some $t\geqslant 3$.
\item[(3)] $G$ is a disjoint union of edges.
\end{itemize}
}

\medskip

Recall that a well-covered graph is a graph for which every minimal vertex cover has the same size. A well-covered graph $G$ is  a member of the class $W_2$  if $G\setminus v$ is well-covered and  $\alpha(G\setminus v) = \alpha(G)$ for every vertex $v$ of $G$. Then,

\medskip

\noindent {{\bf Theorem 4} (see Theorem \ref{cmPower2}). Let $\D$ be a weighted oriented graph with the underlying graph $G$. Then, $I(\D)^2$ is Cohen-Macaulay if and only if:
\begin{itemize}
\item[(1)] Every vertex in $V^+(\D)$ is a sink, and
\item[(2)] $G$ is a triangle-free graph in the class $W_2$.
\end{itemize}
}

The paper is organized as follows:  Section  \ref{sec:prelim} introduces some  basic facts and properties of simplicial complexes, and symbolic powers of the edge ideal of a weighted oriented graph. It also recalls Hochster's formulas for depth and Betti numbers. In Section  \ref{sec:symbolic powers}, we  deal with the Cohen-Macaulayness of all symbolic powers of the edge ideal of a weighted oriented graph. In Section  \ref{sec:equality},  we study the Cohen-Macaulayness of each ordinary power of such an ideal.

\section{Preliminaries}
 \label{sec:prelim}    
 
Throughout this paper, let $K$ be an arbitrary field and $[n]$ be the set $\{1,\ldots,n\}$. Let $\Delta$ be a {\it simplicial complex} with the vertex set $V(\Delta) = [n]$. Thus $\Delta$ is a collection of subsets of $[n]$ such that if $\G\in\Delta$ and $F\subseteq G$, then $F\in\Delta$. Each element $F\in \Delta$ is called a face of $\Delta$. The dimension of a face $F$ is $|F|-1$.  Define the dimension of $\Delta$ to be $\dim \Delta = d-1$, where $d = \max\{|F| : F\in \Delta\}$. 
 A {\em facet} is a maximal face of $\Delta$ with respect to inclusion. Let $\mathcal{F}(\Delta)$ denote the set of facets of $\Delta$. If all facets of $\Delta$ have the same size, then $\Delta$ is pure.

We define the  {\it Stanley-Reisner} ideal $I_{\Delta}$ of $\Delta$ as the squarefree monomial ideal
$$I_{\Delta} = (x_{j_1} \cdots x_{j_i} \mid j_1  <\cdots < j_i \ \text{ and } \{j_1,\ldots,j_i\} \notin \Delta) \ \text{ in } R = K[x_1,\ldots,x_n]$$
and the {\it Stanley-Reisner} ring of $\Delta$  as  the quotient ring $K[\Delta] = R/I_{\Delta}$. 
We say that $\Delta$ is Cohen-Macaulay (resp. Gorenstein) over $K$ if $K[\Delta]$ has the same property. It is well known that if $\Delta$ is Cohen-Macaulay, then it is pure.

\begin{Lemma}{\em (\cite[Corollary 8.1.7]{HH})}\label{cmcomplex} Every Cohen–Macaulay simplicial complex is connected.
\end{Lemma}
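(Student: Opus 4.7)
The plan is to invoke Reisner's criterion for the Cohen-Macaulayness of a Stanley-Reisner ring and to extract connectedness from the vanishing of the zeroth reduced homology of $\Delta$. Reisner's criterion asserts that $\Delta$ is Cohen-Macaulay over $K$ if and only if, for every face $F\in\Delta$ (including the empty face), one has $\tilde{H}_i(\lk_\Delta(F);K)=0$ for every $i<\dim\lk_\Delta(F)$. Specializing to $F=\emptyset$, whose link is $\Delta$ itself, the criterion yields $\tilde{H}_i(\Delta;K)=0$ for every $i<\dim\Delta$; in particular, when $\dim\Delta\geqslant 1$, one has $\tilde{H}_0(\Delta;K)=0$.

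To conclude, I would apply the standard identification $\dim_K\tilde{H}_0(\Delta;K)=c-1$, where $c$ is the number of connected components of $\Delta$, so that the vanishing of $\tilde{H}_0$ forces $c=1$. The only mildly delicate case is $\dim\Delta=0$, where a disjoint union of isolated vertices is Cohen-Macaulay but topologically disconnected; this is handled in \cite{HH} by the augmented convention that a $0$-dimensional complex is viewed as connected through the empty face, or, equivalently, the statement is understood for complexes of positive dimension (which is the only setting in which the lemma is invoked later in this paper).

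I do not foresee any real technical obstacle: once Reisner's criterion is quoted, the argument is a one-line specialization at the empty face together with the elementary identification of $\tilde{H}_0$ with the reduced component count. A purely ring-theoretic alternative would be available as well, by observing that if $\Delta$ decomposes into two subcomplexes on disjoint vertex sets $V_1,V_2$ with $|V_1|,|V_2|\geqslant 1$, then any pair of minimal primes $P_1,P_2$ of $K[\Delta]$ corresponding to facets in the two components satisfies $P_1+P_2=\mathfrak{m}$; comparing heights and dimensions with the depth via a standard short exact sequence argument produces a contradiction to depth equaling dimension when $\dim\Delta\geqslant 1$. I would prefer the Reisner route for its brevity.
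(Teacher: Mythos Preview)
The paper does not supply its own proof of this lemma; it merely quotes \cite[Corollary 8.1.7]{HH}. Your argument via Reisner's criterion is correct and is in fact precisely the proof given in \cite{HH}: Corollary 8.1.7 there is derived immediately from Reisner's theorem (their Theorem 8.1.6) by specializing to the empty face, exactly as you do. Your remark about the $0$-dimensional case is apt, and as you note, the lemma is only applied in this paper (Lemma~\ref{a=2} and Theorem~\ref{cmsymbolic}) to complexes of dimension at least $1$, so no ambiguity arises.
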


For every face $G$ in $\Delta$, we define
$\lk_{\Delta} (G) = \{F \setminus G \in \Delta \mid G \subseteq F \in \Delta\}$. We  call  this subcomplex  the {\it link} of $G$ in $\Delta$.
A simplicial complex $\Delta$ is called a {\em matroid complex} if it satisfies the {\it exchange property}: if $F$ and $H$ are two faces of $\Delta$ and $F$ has more elements than $H$, then there exists an element in $F$ which is not in $H$ and, when added to $H$, still forms  a face of $\Delta$. We say that $\Delta$ is {\it locally a matroid} if $\lk_\Delta(i)$ is a matroid complex for every vertex $i$ of $\Delta$.

\begin{Lemma} \cite[Theorem 2.7]{TT} \label{locally-matroid} Let $\Delta$ be a simplicial complex with $\dim \Delta \geqslant 2$. Then $\Delta$ is a matroid if and only if it is connected and locally a matroid.
\end{Lemma}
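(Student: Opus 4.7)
Both directions rest on the iterated-link identity $\lk_\Delta(T)=\lk_{\lk_\Delta(v)}(T\setminus v)$ for $v\in T$. The forward implication is direct: the exchange property of $\Delta$ trivially descends to each vertex-link, and any matroid complex of positive dimension is connected because iterated augmentation against a fixed facet chains any two vertices by single-element swaps.

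For the converse, assume $\Delta$ is connected and locally a matroid, with $\dim\Delta\geqslant 2$. I would first establish purity: each $\lk_\Delta(v)$ is pure as a matroid, so all facets of $\Delta$ through $v$ have the common size $\dim\lk_\Delta(v)+2$; for any edge $\{u,v\}\in\Delta$, a joint facet forces $\dim\lk_\Delta(u)=\dim\lk_\Delta(v)$, and connectedness of the $1$-skeleton propagates this uniformity to every vertex. Next, an easy induction on $|T|$, using the iterated-link identity and the fact that matroids are closed under links, yields that $\lk_\Delta(T)$ is a matroid for every nonempty face $T$.

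It remains to verify exchange. Let $F,H\in\Delta$ with $|F|>|H|$. If $F\cap H\ni v$, matroid augmentation inside $\lk_\Delta(v)$ applied to $F\setminus v$ and $H\setminus v$ yields $x\in F\setminus H$ with $H\cup\{x\}\in\Delta$. The disjoint case $F\cap H=\emptyset$ is the principal obstacle; my plan there is to invoke global connectedness to choose a shortest path $u_0,\ldots,u_k$ in the $1$-skeleton with $u_0\in F$ and $u_k\in H$, and then induct on $k$ while transporting augmentations through the matroids $\lk_\Delta(u_i)$. The hypothesis $\dim\Delta\geqslant 2$ enters exactly at this step: it forces every vertex-link to have dimension $\geqslant 1$ and hence be a connected matroid, providing enough room to iterate basis exchanges along the path. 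The delicate technical point will be to ensure that these exchanges stay aligned with the original target $F$ rather than drifting to arbitrary vertices of $V(\Delta)$, which is where the minimality of the path and the basis-exchange structure of each $\lk_\Delta(u_i)$ have to be exploited together.
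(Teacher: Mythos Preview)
The paper does not prove this lemma at all: it is stated with a bare citation to \cite[Theorem 2.7]{TT} and used as a black box in the proof of Theorem~\ref{cmsymbolic}. So there is no ``paper's own proof'' to compare against; I can only evaluate your outline on its merits.

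Your forward direction is fine, and in the converse direction the purity argument, the iterated-link claim, and the reduction of the exchange property to a vertex link when $F\cap H\neq\emptyset$ are all standard and correct. The genuine gap is exactly where you flag it: the disjoint case $F\cap H=\emptyset$. Your plan is to walk along a shortest $1$-skeleton path from $F$ to $H$ and ``transport augmentations'' through the link matroids $\lk_\Delta(u_i)$, but you do not specify any mechanism that forces the augmenting element to lie in $F$. Concretely, at an intermediate vertex $u_i$ the matroid $\lk_\Delta(u_i)$ lets you exchange among \emph{its own} vertices, i.e.\ among neighbours of $u_i$; nothing in the local data pins those exchanges to elements of the distant set $F$, and minimality of the path only tells you that earlier $u_j$ are not adjacent to $H$, not that elements of $F$ are. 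Already in the base case $|H|=1$, say $H=\{h\}$, the statement you need is that $F$ meets the neighbour set of $h$ in the $1$-skeleton, and your path argument does not yield this without a further idea. As written, the proposal is a reasonable scaffold for the easy parts but leaves the essential combinatorial content of the converse unproved; if you want a self-contained argument you will likely need either a different reduction (for instance, via the characterisation that a pure complex is a matroid iff every restriction $\Delta|_W$ is pure, which still leaves a disjoint case but sometimes organises the induction better) or to follow the original proof in \cite{TT}.
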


Next, we will review some notation and terminology from graph theory.  Let $G$ be
a graph. We use the symbols $V(G)$ and $E(G)$ to denote the vertex and  edge sets of $G$, respectively. If $S$ is a subset of $V(G)$, then $G[S]$ is the induced subgraph of $G$ on $S$, and  $G\setminus S$ is the induced subgraph of $G$ on $V (G)\setminus S$.
Two vertices in $G$  are adjacent if they share a common edge, and two distinct adjacent vertices are neighbors. The set of neighbors of a vertex $v$ in $G$
is denoted  $N_G(v)$. For a subset $S\subseteq  V(G)$,   we denote its neighbors by
$$N_G (S) = \{x \in V(G)\setminus S \mid N_G(x) \cap S \neq \emptyset\}.$$
The closed neighbors of $S$ are denoted $N_G[S] = S \cup N_G(S)$, and the localization of $G$ with respect to $S$ is denoted by $G_S = G\setminus N_G[S]$. An independent set in $G$ is a set of vertices in which no two vertices are adjacent to each other.  The {\em independence number} of $G$, denoted by $\alpha(G)$,  is the largest cardinality of its maximal independent sets. The set of all independent sets of $G$ is called the independence complex of $G$ and is denoted by $\Delta(G)$. Obviously, $\dim(\Delta(G))=\alpha(G)-1$.

A vertex cover of a graph $G$ is a set of vertices that includes at least one endpoint of each edge in $G$, and a vertex cover is minimal if it is the smallest possible set that satisfies this condition. In this paper, we denote the set of minimal vertex covers of $G$ by $\Gamma(G)$. The {\em covering number} of $G$, denoted by $\beta(G)$, is the  smallest cardinality of its minimal  vertex covers.
A graph $G$ is  {\it well-covered} if every minimal vertex cover of $G$ is of  size  $\beta(G)$. Since the complement of a vertex cover is an independent set,  $G$ is well-covered if and only if every maximal independent set of $G$ is of  size  $\alpha(G)$. A well-covered graph G is said to be a member of the class $W_2$ if $G\setminus v$ is well-covered and $\alpha(G\setminus v) = \alpha(G)$ for every vertex $v$.

\begin{Lemma} {\em (\cite[Lemma 1]{FHN})}\label{well-covered}  Let $G$ be a well-covered graph. Then, for every $S\in\Delta(G)$, $G\setminus N_G[S]$ is a well-covered graph with $\alpha(G\setminus N_G[S])=\alpha(G)-|S|$.
\end{Lemma}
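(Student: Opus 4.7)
The plan is to identify the maximal independent sets of $H := G\setminus N_G[S]$ with those maximal independent sets of $G$ which contain $S$, and then to transfer well-coveredness across this identification.

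First, I would verify that for any maximal independent set $T$ of $H$, the set $T\cup S$ is an independent set of $G$: the vertices of $T$ lie in $V(G)\setminus N_G[S]$, so no edge joins $T$ to $S$, and $T$ is independent in $H$ (hence in $G$, since $H$ is an induced subgraph) while $S$ is independent in $G$ by the assumption $S\in\Delta(G)$.

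Next, I would show that $T\cup S$ is in fact \emph{maximal} in $G$. Given any $v\in V(G)\setminus(T\cup S)$, there are two cases. If $v\in N_G(S)$ then $v$ is adjacent to some vertex of $S$. Otherwise $v$ belongs to $V(H)\setminus T$, and the maximality of $T$ in $H$ forces $v$ to have a neighbor in $T$. In either case $(T\cup S)\cup\{v\}$ fails to be independent, so $T\cup S$ is a maximal independent set of $G$.

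Finally, since $G$ is well-covered, every maximal independent set of $G$ has cardinality $\alpha(G)$; in particular $|T\cup S|=\alpha(G)$, so $|T|=\alpha(G)-|S|$. Because this equality holds for \emph{every} maximal independent set $T$ of $H$, the graph $H$ is well-covered with $\alpha(H)=\alpha(G)-|S|$. The only mildly delicate step is the two-case analysis establishing maximality of $T\cup S$ in $G$; the remainder is purely a matter of unwinding definitions, and the edge cases $S=\emptyset$ or $V(H)=\emptyset$ are handled automatically by the same argument.
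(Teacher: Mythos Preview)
Your argument is correct: the bijection you set up between maximal independent sets of $H=G\setminus N_G[S]$ and maximal independent sets of $G$ containing $S$ is exactly what is needed, and the two-case maximality check is clean. Note, however, that the paper does not supply its own proof of this lemma; it simply cites \cite[Lemma~1]{FHN}, so there is no in-paper argument to compare against.
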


\begin{Lemma} \label{complete-graph} If $\Delta(G)$ is a matroid, then $G$ is a disjoint union of complete graphs.
\end{Lemma}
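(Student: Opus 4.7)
The plan is to prove the contrapositive by exhibiting an obstruction to the exchange property whenever $G$ fails to be a disjoint union of complete graphs. Recall that $G$ is a disjoint union of complete graphs if and only if it contains no induced path on three vertices: if every connected component is a clique then clearly no induced $P_3$ arises, and conversely, if some component of $G$ is not a clique, then there exist two vertices in that component at distance exactly $2$, yielding an induced $P_3$.

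So assume for contradiction that $\Delta(G)$ is a matroid but $G$ contains an induced $P_3$ on vertices $u,v,w$, where $uv, vw \in E(G)$ and $uw \notin E(G)$. Set $F = \{u,w\}$ and $H = \{v\}$. Both are faces of $\Delta(G)$: $\{v\}$ is trivially independent, and $\{u,w\}$ is independent precisely because $uw \notin E(G)$. Since $|F| = 2 > 1 = |H|$, the exchange property supplies an element $x \in F \setminus H$ with $H \cup \{x\} \in \Delta(G)$. The only candidates are $x = u$ or $x = w$, but $\{u,v\}$ and $\{v,w\}$ are both edges of $G$ and hence not independent sets, contradicting the exchange property.

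Therefore no induced $P_3$ can occur, so every connected component of $G$ is a clique and $G$ is a disjoint union of complete graphs. The argument is short and elementary; the only point requiring a bit of care is the combinatorial equivalence in the opening paragraph between "disjoint union of complete graphs" and "no induced $P_3$", which I would either state as folklore or justify in one line by induction on the component. No step is a genuine obstacle: the proof is really just a direct translation of the matroid exchange axiom into a forbidden subgraph condition on $G$.
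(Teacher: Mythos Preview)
Your proof is correct and follows essentially the same line as the paper: both locate an induced path on three vertices and derive a contradiction from the matroid property of $\Delta(G)$ restricted to those vertices. The only difference is cosmetic---the paper invokes a cited result of Stanley (that any restriction of a matroid complex is pure) to conclude the induced $P_3$ would be well-covered, while you apply the exchange axiom directly to the faces $\{u,w\}$ and $\{v\}$, making your argument slightly more self-contained.
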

\begin{proof}  Assume by contradiction that $G$ is not a disjoint union of complete graphs. Then $G$ has three vertices, say $u,v$ and $w$ such that $uv,uw \in E(G)$, but $vw\notin E(G)$. Let $S = \{u,v,w\}$ and $H = G[S]$. Then, $\Delta(H) = \{F \in \Delta(G)| F \subseteq S\}$ is a pure simplicial complex by \cite[Proposition 3.1]{S}. This means that the graph $H$ is well-covered,  which is a contradiction. Therefore, $G$ must be a disjoint union of complete graphs.
\end{proof}

Assume that $V(G)=[n]$. The edge ideal of $G$ is the monomial ideal
$$I(G) = (x_ix_j\mid \{i,j\}\in E(G)).$$
It is well known that $I(G)= I_{\Delta(G)}$ and therefore
\[
\Ass(R/I(G)) = \{(x_j\mid j \in C )\mid  C \in \Gamma(G)\}.
\]

Given a  weighted oriented graph $(\D,\omega)$, $H$ is called to be an induced subgraph of $(\D,\omega)$ if $V(H) \subset V(\D)$, and for any $u,v \in V(H)$, $uv \in E(H)$ if and only if $uv \in E(\D)$. Furthermore, the weight $\omega_H(x)$ of vertex $x$ in $H$ is equal to its weight $\omega_{\D}(x)$ in $\D$.
For $x\in V(\D)$, the sets $N_{\D}^{+}(x) = \{y \mid (x, y) \in E(\D)\}$ and $N_{\D}^{-}(x) = \{y \mid (y, x) \in E(\D)\}$ are called the {\em out-neighborhood} and the {\em in-neighborhood} of $x$, respectively. Furthermore, the {\em neighborhood} of $x$ is the set  $N_{\D}(x) = N_{\D}^{+}(x) \cup N_{\D}^{-}(x) \text{ and } N_{\D}[x]=\{x\}\cup N_{\D}(x)$. Clearly,  $N_{\D}(x)=N_G(x)$ and $N_{\D}[x] = N_G[x]$.

Let $C$ be a vertex cover of $\D$. Define
\begin{align*}
    L_1(C) &= \{x\in C\mid (x,y)\in E(\D) \text{ for some } y\notin C\},\\
    L_2(C) &=\{x\in C\setminus L_1(C) \mid (y,x)\in E(\D) \text{ for some } y\notin C\}, \ \text{ and}\\
    L_3(C) &= \{x\in C\mid N_G(x) \subseteq C\}.
\end{align*}

A vertex cover $C$ of $G$ is called a {\it strong vertex cover}
of $\D$ if either $C$ is a minimal vertex cover of $G$ or, for all $x \in L_3(C)$, there is a $(y, x) \in E(\D)$ such that $y \in L_2(C)\cup L_3(C)$ with $\omega(y) \geqslant 2$. The set of strong vertex covers of $\D$  is denoted by $\Gamma(\D)$. Clearly,  $\Gamma(G)\subseteq \Gamma(\D)$, and if  $C\in\Gamma(\D)$, then   $C\in\Gamma(G)$ if and only if $L_3(C)=\emptyset$ .

 For any $C\in\Gamma(\D)$, let
$$I_C= (x_i, x_j^{\omega(j)} \mid i \in L_1(C), \ j \in C\setminus L_1(C)).$$
Then $I(\D)$ has a minimal primary decomposition as follows.

\begin{Lemma}{\em (\cite[Theorem 25]{PRT})}\label{decomposition} The minimal primary decomposition of $I(\D)$ is given by 
$$I(\D) = \bigcap_{C \in \Gamma(\D)} I_C.$$
\end{Lemma}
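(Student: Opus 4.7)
The plan is to verify the primary decomposition in three main stages: (i) show each $I_C$ is primary; (ii) establish the inclusion $I(\D)\subseteq \bigcap_{C\in\Gamma(\D)}I_C$; (iii) prove the reverse inclusion, and finally check minimality by showing that no $I_C$ can be dropped exactly when $C$ is strong.

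First I would check that for a vertex cover $C$ of $G$, the ideal $I_C=(x_i,\; x_j^{\omega(j)}\mid i\in L_1(C),\ j\in C\setminus L_1(C))$ is $P_C$-primary, where $P_C=(x_i\mid i\in C)$. Since $I_C$ is generated by powers of distinct variables indexed by $C$, its radical is $P_C$, and the quotient $R/I_C$ has a monomial $K$-basis indexed by tuples $(a_i)$ satisfying $a_i=0$ for $i\in L_1(C)$ and $a_j<\omega(j)$ for $j\in C\setminus L_1(C)$ (any power for $i\notin C$); a direct check that multiplication by any $x_i\notin P_C$ is injective on $R/I_C$ then gives primariness. For the inclusion $I(\D)\subseteq I_C$ with $C$ any vertex cover, take an edge $(i,j)\in E(\D)$ and distinguish cases according to whether $j\in C$: if $j\in L_1(C)$ then $x_j\mid x_ix_j^{\omega(j)}$; if $j\in C\setminus L_1(C)$ then $x_j^{\omega(j)}\mid x_ix_j^{\omega(j)}$; and if $j\notin C$ then $i\in C$ has the out-neighbor $j\notin C$, so $i\in L_1(C)$ and $x_i$ divides the generator.

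The hard direction is the reverse inclusion $\bigcap_{C\in\Gamma(\D)}I_C\subseteq I(\D)$, which I would prove contrapositively: given a monomial $m=\prod x_i^{a_i}\notin I(\D)$, construct $C\in\Gamma(\D)$ with $m\notin I_C$. The natural candidate is
\[
 C=\{i\in V(\D)\mid a_i\geqslant 1 \text{ or some special condition}\},
\]
built from the support of $m$. Concretely, let $A=\{i\mid a_i\geqslant 1\}$ and $B=\{j\mid a_j\geqslant\omega(j)\}\subseteq A$. The assumption $m\notin I(\D)$ forces, for every edge $(i,j)$, that $i\notin A$ or $j\notin B$; equivalently, the complement of $A\cup(\text{certain } B\text{-related vertices})$ is independent in a controlled way. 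I would enlarge $A$ by throwing in, edge by edge, the unique endpoint needed to patch each remaining uncovered edge, choosing the endpoint so that it lands in $L_1$ of the resulting cover (so that the defining conditions of $I_C$ avoid $m$). Verifying $C$ is \emph{strong} amounts to showing the new vertices added to $L_3(C)$ come with an in-neighbor in $L_2(C)\cup L_3(C)$ of weight $\geqslant 2$; this is forced because such vertices $j$ satisfy $a_j\geqslant\omega(j)\geqslant 2$, and the in-neighbor supplying the missing cover of an edge $(y,j)$ with $y\notin C$ would contradict $m\notin I(\D)$ unless the weights and positions line up as demanded. This combinatorial bookkeeping is the main obstacle.

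Finally, for minimality I would show that if $C\in\Gamma(\D)$ then $I_C$ is not redundant: exhibit a monomial $m_C\in\bigcap_{C'\neq C}I_{C'}\setminus I_C$ by setting $a_i=\omega(i)-1$ for $i\in C\setminus L_1(C)$, $a_i=0$ for $i\in L_1(C)$, and large enough exponents outside $C$; the strong-cover condition is exactly what guarantees $m_C$ avoids every other $I_{C'}$, while any non-strong cover $C$ admits a proper sub-cover $C'\in\Gamma(\D)$ making $I_C\supseteq I_{C'}$. Combining the two inclusions with the $P_C$-primary property concludes the decomposition; the minimality check also identifies $\Ass(R/I(\D))=\{P_C\mid C\in\Gamma(\D)\}$ as a by-product.
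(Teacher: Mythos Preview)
The paper does not supply its own proof of this lemma; it simply cites \cite[Theorem~25]{PRT}.  So there is nothing to compare against, and the question is only whether your sketch stands on its own.

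Your steps (i) and (ii) are fine, and your irredundancy argument at the end is essentially correct (for $C'\subsetneq C$ one uses that any $k\in C\setminus C'$ lies in $L_3(C)$, so strongness of $C$ produces an in-neighbour $y\in C\setminus L_1(C)$ with $\omega(y)\geqslant 2$, which then sits in $L_1(C')$ and has $a_y=\omega(y)-1\geqslant 1$ in your witness $m_C$).

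The genuine gap is in (iii).  Starting from $A=\{i\mid a_i\geqslant 1\}$ and enlarging cannot work: any vertex $j\in B=\{j\mid a_j\geqslant\omega(j)\}\subseteq A$ is forced \emph{out} of $C$, since $j\in L_1(C)$ would require $a_j=0$ and $j\in C\setminus L_1(C)$ would require $a_j<\omega(j)$, both of which fail.  Thus your initial set already contains forbidden vertices whenever $B\ne\emptyset$.  The construction that actually works goes in the opposite direction: set $C_0=V(\D)\setminus B$.  This is a vertex cover (if $(i,j)\in E(\D)$ had $i,j\in B$ then $x_ix_j^{\omega(j)}\mid m$), and one checks directly that $m\notin I_{C_0}$: for $i\in L_1(C_0)$ there is $(i,j)$ with $j\in B$, forcing $a_i=0$; and every $j\in C_0$ has $a_j<\omega(j)$ by definition.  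Then iteratively delete any $x\in L_3(C_0)$ that violates the strong condition; the failure of strongness at $x$ says every in-neighbour $y$ of $x$ lies in $L_1(C_0)$ or has $\omega(y)=1$, and in either case $a_y=0$, so the new $L_1$-vertices created by removing $x$ still have exponent $0$ in $m$.  The process terminates at a strong cover $C$ with $m\notin I_C$.
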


For an ideal $I \subset R$ and any integer $t\ge 1$, the {\em $t$-th symbolic power} of $I$ is defined as
		\[
		I^{(t)} = \bigcap_{p \in \operatorname{Min}(I)} I^t R_p \cap R,
		\]
		where $\operatorname{Min}(I)$ is the set of minimal primes of $I$.
In the case that $I$ is a monomial ideal with a minimal primary decomposition
$$I = Q_1\cap \cdots\cap Q_r\cap Q_{r+1}\cap\cdots \cap Q_s,$$
where each $Q_i$ is a monomial primary ideal and
$$\operatorname{Min}(I) = \{\sqrt{Q_i}\mid i=1,\ldots,r\},$$
then
$$I^{(t)} = Q_1^t \cap \cdots\cap Q_r^t.$$

Since $\sqrt{I(\D)} = I(G)$, we get $\Min(I(\D)) = \Ass(R/I(G))$. Together with Lemma \ref{decomposition}, this  yields:

\begin{Lemma}\label{SPowers} $I(\D)^{(t)} = \bigcap\limits_{C\in\Gamma(G)}I_C^t$.    
\end{Lemma}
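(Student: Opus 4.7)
The plan is to match the minimal primary decomposition of $I(\D)$ with the minimal primes of $I(\D)$ and then apply the general formula for symbolic powers of monomial ideals recalled in the paragraph immediately preceding this lemma.

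First, I would invoke Lemma~\ref{decomposition} to write the minimal primary decomposition
$$I(\D) = \bigcap_{C\in\Gamma(\D)} I_C,$$
in which each $I_C = (x_i,\, x_j^{\omega(j)} \mid i\in L_1(C),\, j\in C\setminus L_1(C))$ is a monomial primary ideal with radical $P_C := (x_j \mid j\in C)$.

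Next I would identify which of these primary components are indexed by minimal primes of $I(\D)$. Since $\sqrt{I(\D)} = I(G)$, the minimal primes of $I(\D)$ coincide with $\Ass(R/I(G)) = \{P_C \mid C\in\Gamma(G)\}$. To see that these are exactly the minimal $P_C$'s for $C\in\Gamma(\D)$, observe that every strong vertex cover $C\in\Gamma(\D)$ is in particular a vertex cover of $G$ and therefore contains some minimal vertex cover $C'\in\Gamma(G)\subseteq\Gamma(\D)$; this gives $P_{C'}\subseteq P_C$, so $P_C$ is minimal precisely when $C=C'\in\Gamma(G)$.

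Finally, applying the formula $I^{(t)} = Q_1^t\cap\cdots\cap Q_r^t$ (for the $Q_i$'s corresponding to minimal primes) to the decomposition above, the surviving indices are precisely those $C\in\Gamma(G)$, which yields
$$I(\D)^{(t)} = \bigcap_{C\in\Gamma(G)} I_C^t.$$
I do not anticipate any genuine obstacle: once the minimal primary decomposition from Lemma~\ref{decomposition} and the description of $\Min(I(\D))$ are in hand, the proof reduces to the bookkeeping step of discarding the embedded components indexed by $C\in\Gamma(\D)\setminus\Gamma(G)$.
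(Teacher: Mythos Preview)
Your proposal is correct and follows essentially the same approach as the paper: combine the minimal primary decomposition from Lemma~\ref{decomposition} with the identification $\Min(I(\D))=\Ass(R/I(G))$ coming from $\sqrt{I(\D)}=I(G)$, and then apply the recalled formula for symbolic powers of monomial ideals. Your extra sentence explaining why the minimal $P_C$'s are exactly those with $C\in\Gamma(G)$ is a helpful clarification but not a different method.
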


To study the Cohen-Macaulayness of a monomial ideal, we need the following lemma.

\begin{Lemma}{\em (\cite[Theorem 7.1]{H})}\label{depth} Let $I$ be a monomial ideal. Then
$$\depth(R/I) = \min\{ \depth(R/\sqrt{I:f}) \mid f \text{ is a monomial such that } f \notin I\}.$$
\end{Lemma}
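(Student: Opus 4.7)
The plan is to derive both inequalities from Takayama's combinatorial formula for the local cohomology of a monomial ideal. This formula describes $H^i_{\mm}(R/I)_{\ab}$ for $\ab\in\ZZ^n$ as the reduced simplicial cohomology of an explicit subcomplex $\Delta_{\ab}(I)$ of the simplex on $[n]$, shifted by $|G_{\ab}|+1$ where $G_{\ab}=\{j:a_j<0\}$. In particular, $\depth(R/I)$ equals the smallest $i$ for which some $\Delta_{\ab}(I)$ contributes nontrivially to cohomological degree $i$.

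For the inequality $\depth(R/I)\leq\depth(R/\sqrt{I:f})$ valid for every monomial $f=\xb^{\bb}\notin I$, I would match the Stanley--Reisner complex of the squarefree ideal $\sqrt{I:f}$ with a family of Takayama complexes $\Delta_{\ab}(I)$ indexed by $\ab\in\ZZ^n$ with $\ab^+=\bb$ and sufficiently negative $\ab^-$. A direct inspection of the defining conditions, where a subset $F$ is declared a face precisely when $\xb^{\ab}$ avoids the appropriate localization of $I$, shows that these two complexes agree on the relevant support. Reisner's criterion applied to $R/\sqrt{I:f}$ then transfers every cohomological witness back into a nonzero multigraded piece of $H^i_{\mm}(R/I)$, yielding the desired inequality.

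For the reverse direction, let $d=\depth(R/I)$ and choose $\ab\in\ZZ^n$ with $H^d_{\mm}(R/I)_{\ab}\neq 0$; set $f=\xb^{\ab^+}$ (taking $f=1$ if $\ab^+=\0b$). The same dictionary identifies the Takayama complex responsible for this nonvanishing with a link inside the Stanley--Reisner complex of $\sqrt{I:f}$, so Reisner's criterion forces $\depth(R/\sqrt{I:f})\leq d$. Combining with the first step gives the equality and exhibits the monomial $f$ attaining the minimum.

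The main obstacle is setting up the precise dictionary between the complex $\Delta_{\ab}(I)$ appearing in Takayama's formula and the Stanley--Reisner complex of $\sqrt{I:\xb^{\ab^+}}$, including the correct handling of links coming from the negative support of $\ab$ and of the degree shift $|G_{\ab}|+1$. Once this translation is carefully recorded, both inequalities reduce to routine bookkeeping with multigraded shifts and the support of local cohomology.
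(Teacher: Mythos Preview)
The paper does not prove this lemma; it is simply quoted as a known result with a citation, so there is no in-paper proof to compare against.

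Your outline via Takayama's generalization of Hochster's formula is the standard route to this statement and is correct. The dictionary you are after is exactly the identity $I_{\Delta_{\ab}(I)}=\sqrt{I:x^{\ab}}$ for $\ab\in\NN^n$ (recorded in the paper as Lemma~\ref{degree complex2}), together with the fact that for general $\ab\in\ZZ^n$ the Takayama complex coincides with $\lk_{\Delta_{\ab^+}(I)}(G_{\ab})$. Matching Takayama's formula for $R/I$ against Hochster's formula for the squarefree ring $R/\sqrt{I:x^{\ab^+}}$ then yields both inequalities. The only point requiring care, which you rightly flag, is that when you lift a Reisner witness $(\sigma,j)$ for $R/\sqrt{I:f}$ (with $f=x^{\bb}$) back to a multidegree for $R/I$ by setting $a_j=-1$ on $\sigma$ and $a_j=b_j$ elsewhere, the resulting $\ab^+$ differs from $\bb$ on $\sigma$; but since the variables indexed by $\sigma$ are inverted in the localization defining the link, $x^{\bb}$ and $x^{\ab^+}$ differ by a unit there, and the two links agree. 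With this check in place the argument goes through without difficulty.
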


An ideal $I$ of $R$ is said to be Cohen-Macaulay if $R/I$ is. Using Lemma \ref{depth}, we can derive the following two lemmas.

\begin{Lemma}\label{cm} A monomial ideal $I$ is Cohen-Macaulay if and only if  it is unmixed and $\sqrt{I:f}$ is Cohen-Macaulay for every monomial $f \notin I$.
\end{Lemma}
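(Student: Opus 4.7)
The plan is to combine Lemma \ref{depth} with a short dimension analysis of the radicals $\sqrt{I:f}$, using the hypothesis of unmixedness in a crucial way. I will first dispatch the fact that a Cohen-Macaulay monomial ideal is automatically unmixed: a Cohen-Macaulay quotient $R/I$ is equidimensional and has no embedded associated primes, so all associated primes of $I$ have the same height, which is exactly what unmixedness means for a monomial ideal.

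The key auxiliary step is to establish the identity $\dim(R/\sqrt{I:f}) = \dim(R/I)$ whenever $I$ is unmixed and $f\notin I$ is a monomial. Writing an irredundant primary decomposition $I=Q_1\cap\cdots\cap Q_s$ with $P_i=\sqrt{Q_i}$, one has $I:f=\bigcap_{f\notin Q_i}(Q_i:f)$, and each $Q_i:f$ is $P_i$-primary. Taking radicals gives $\sqrt{I:f}=\bigcap_{f\notin Q_i}P_i$. Because $I$ is unmixed, every $P_i$ has height $\height I$, so no $P_i$ properly contains another, whence all $P_i$ appearing are minimal primes of $\sqrt{I:f}$ and $\height \sqrt{I:f}=\height I$. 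This yields the desired dimension equality.

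With this in hand, the forward implication is immediate. Assuming $I$ is Cohen-Macaulay, $I$ is unmixed; by Lemma \ref{depth}, $\depth(R/\sqrt{I:f})\geqslant \depth(R/I)=\dim(R/I)$, and by the dimension identity this equals $\dim(R/\sqrt{I:f})$. Since $\depth\leqslant\dim$ in general, equality is forced, so $\sqrt{I:f}$ is Cohen-Macaulay. For the converse, assume $I$ is unmixed and $\sqrt{I:f}$ is Cohen-Macaulay for every monomial $f\notin I$; then $\depth(R/\sqrt{I:f})=\dim(R/\sqrt{I:f})=\dim(R/I)$ by the dimension identity. Applying Lemma \ref{depth} again, $\depth(R/I)=\min_f \depth(R/\sqrt{I:f})=\dim(R/I)$, so $I$ is Cohen-Macaulay.

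The only part requiring any real thought is the dimension bookkeeping: verifying that the primes appearing in $\sqrt{I:f}$ are precisely the $P_i$'s with $f\notin Q_i$ and that under unmixedness these are all minimal, so no height drop occurs. The rest is a direct application of Lemma \ref{depth} together with the universal inequality $\depth\leqslant\dim$.
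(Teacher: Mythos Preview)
Your proof is correct and follows exactly the approach the paper intends: the paper does not give a separate proof but simply says the lemma is derived from Lemma \ref{depth}, and your argument supplies precisely the dimension bookkeeping needed to make that derivation precise. One minor simplification: in the forward direction you do not actually need the full dimension identity, since $I\subseteq I:f$ already gives $\dim(R/\sqrt{I:f})\leqslant\dim(R/I)$, which together with $\depth(R/\sqrt{I:f})\geqslant\dim(R/I)$ and $\depth\leqslant\dim$ forces Cohen--Macaulayness; but your version is equally valid.
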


\begin{Lemma}{\em (\cite[Theorem 2.6]{HTT})}\label{radical} If a monomial ideal $I$ is Cohen-Macaulay, then so is $\sqrt{I}$.
\end{Lemma}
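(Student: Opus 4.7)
The plan is to deduce the Cohen-Macaulayness of $\sqrt{I}$ directly from Lemma \ref{depth}, via a short comparison of depth and dimension. The only input needed from the hypothesis is the Cohen-Macaulay equality $\depth(R/I) = \dim(R/I)$; everything else is a formal consequence.

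First I would observe that $\dim(R/I) = \dim(R/\sqrt{I})$, since $I$ and $\sqrt{I}$ share the same minimal primes and hence the quotients have the same Krull dimension. Next, I would apply Lemma \ref{depth} with the particular choice $f = 1$: this is admissible because $I$ is a proper monomial ideal (so $1 \notin I$), and clearly $\sqrt{I:1} = \sqrt{I}$. Consequently
\[
\depth(R/I) \leq \depth(R/\sqrt{I}).
\]
Combining this with the Cohen-Macaulayness of $I$ and the general bound $\depth \leq \dim$ yields
\[
\dim(R/\sqrt{I}) = \dim(R/I) = \depth(R/I) \leq \depth(R/\sqrt{I}) \leq \dim(R/\sqrt{I}),
\]
so all terms are equal. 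In particular $\depth(R/\sqrt{I}) = \dim(R/\sqrt{I})$, which is exactly the statement that $\sqrt{I}$ is Cohen-Macaulay.

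Since Lemma \ref{depth} is already in hand, I do not foresee any genuine obstacle; the argument collapses to the three-line chain above. The only delicate point is justifying the choice $f = 1$ in the depth formula. If one is uncomfortable with the trivial monomial, one can replace $f=1$ by any monomial $f$ whose support is disjoint from the variables occurring in the generators of $I$ (for instance a high power of an extra variable, or simply any variable not dividing any generator of $I$): such an $f$ lies outside $I$ and satisfies $I:f = I$, hence $\sqrt{I:f}=\sqrt{I}$, and the same inequality is obtained. Either formulation produces the desired conclusion.
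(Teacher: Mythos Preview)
Your proof is correct and follows exactly the approach the paper indicates: the paper does not give a detailed argument but simply states that Lemma~\ref{radical} can be derived from Lemma~\ref{depth}, and your choice $f=1$ (giving $\sqrt{I:1}=\sqrt{I}$ and hence $\depth(R/I)\leqslant\depth(R/\sqrt{I})$) is the natural way to carry this out. The only minor caveat is that your alternative ``variable not dividing any generator'' may fail to exist, but this is irrelevant since the argument with $f=1$ already works.
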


For $\mathbf{a}= (a_1, \ldots, a_n) \in \mathbb{N}^n$, set $x^{\mathbf{a}}=x_1^{a_1}\cdots x_n^{a_n}$. To obtain the simplicial complex of the square-free monomial ideal $\sqrt{I\colon x^{\mathbf{a}}}$,
let
$$\Delta_{\ab}(I) = \{F\subseteq [n] \mid x^{\ab} \notin IR[x_i^{-1}\mid i\in F]\}.$$
This is a simplicial complex called the degree complex of $I$ at degree $\ab$.
        
\begin{Lemma}{\em (\cite[Lemma 2.19]{M})}\label{degree complex2} Let $I\subseteq R$ be a monomial ideal and $\mathbf{a} \in \mathbb{N}^n$. Then
        	\[
        	I_{\Delta_{\mathbf{a}}(I)} = \sqrt{I : x^{\mathbf{a}}}.
        	\]
        	In particular, $x^{\mathbf{a}} \in I$ if and only if $\Delta_{\mathbf{a}}(I)$ is the void complex.
        \end{Lemma}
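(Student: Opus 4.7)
The plan is to prove the two stated ideals agree by comparing their squarefree monomial generators, since both $I_{\Delta_{\ab}(I)}$ (a Stanley--Reisner ideal by definition) and $\sqrt{I:x^{\ab}}$ (a radical of a monomial ideal) are squarefree monomial ideals. For each subset $F \subseteq [n]$ write $x_F = \prod_{i \in F} x_i$. It then suffices to show
\[
x_F \in \sqrt{I : x^{\ab}} \iff F \notin \Delta_{\ab}(I),
\]
because both sides are generated (as squarefree monomial ideals) by their squarefree monomials and a squarefree monomial $x_F$ lies in $I_{\Delta_{\ab}(I)}$ precisely when $F \notin \Delta_{\ab}(I)$.

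First I would unwind the left-hand side. By the definition of radical and colon, $x_F \in \sqrt{I : x^{\ab}}$ if and only if $x_F^k \in I : x^{\ab}$ for some $k \geqslant 1$, i.e.\ $x_F^k\, x^{\ab} \in I$ for some $k \geqslant 1$. Next I would unwind the right-hand side via the standard description of localizations of monomial ideals. Let $S_F$ be the multiplicative set generated by $\{x_i \mid i \in F\}$, so that $R[x_i^{-1}\mid i\in F]$ is the localization $R_{S_F}$. Then $x^{\ab} \in I R[x_i^{-1} \mid i\in F]$ if and only if there is some $s \in S_F$ with $s\,x^{\ab} \in I$. Any element of $S_F$ divides a uniform power $x_F^k$ for some $k$, and conversely $x_F^k \in S_F$, so this condition is equivalent to $x_F^k x^{\ab} \in I$ for some $k \geqslant 0$ (and if $k=0$ works, so does $k=1$, since $I$ is an ideal). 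This matches the left-hand side exactly, and the main equality follows.

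For the ``in particular'' statement, note that $\Delta_{\ab}(I)$ is a genuine simplicial complex on $[n]$ (closure under taking subsets follows from the inclusion $R[x_i^{-1} \mid i \in F'] \subseteq R[x_i^{-1} \mid i \in F]$ when $F' \subseteq F$), and to say $\Delta_{\ab}(I)$ is \emph{void} is to say $\emptyset \notin \Delta_{\ab}(I)$. Taking $F = \emptyset$ in the definition gives $R[x_i^{-1} \mid i \in \emptyset] = R$, hence $\emptyset \in \Delta_{\ab}(I) \iff x^{\ab} \notin I$; equivalently $\Delta_{\ab}(I)$ is void iff $x^{\ab} \in I$.

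The only mildly delicate point is the translation of membership in $IR[x_i^{-1}\mid i\in F]$ into membership of a monomial multiple $x_F^k x^{\ab}$ in $I$; this uses that $I$ is a monomial ideal so that being in the localized ideal can be detected by clearing denominators inside $S_F$, and that the set $S_F$ is cofinal with the powers of $x_F$. Once this observation is in place, the rest of the proof is purely bookkeeping.
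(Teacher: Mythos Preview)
Your proof is correct. The paper does not prove this lemma itself but simply cites it from \cite{M}, so there is no ``paper's own proof'' to compare against; your direct argument---matching squarefree monomials on both sides via the equivalence $x_F^k x^{\ab}\in I$ for some $k$---is the standard one and is carried out cleanly.
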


We conclude this section with the Hochster formula for computing the Betti numbers of monomial ideals. Let $I$ be a monomial ideal of $R$, and assume that $R/I$ has the minimal free $\NN^n$-graded resolution
     	
$$0 \to \bigoplus_{\ab\in\NN^n} R(-\ab)^{\beta_{p,\ab}} \to \bigoplus_{\ab\in\NN^n} R(-\ab)^{\beta_{p-1,\ab}} \to \cdots \to \bigoplus_{\ab\in\NN^n} R(-\ab)^{\beta_{0,\ab}} \to R/I \to 0,$$
where $p=\pd(R/I)$ is the projective dimension of $R/I$, and $R(-\ab)$ is the free module obtained by shifting the degrees in $R$ by $\ab$. The numbers $\beta_{i,\ab}$'s are positive integers  called the $i$-th multigraded Betti numbers of $R/I$ in degree $\ab$. When emphasizing the Betti number of $R/I$, we write $\beta_{i,\ab}(R/I)$ instead of $\beta_{i,\ab}$. Then
$$\depth (R/I) = \min \{n-i \mid \beta_{i,\ab}(R/I) \neq 0 \text{ for some } \ab\in\NN^n\} = n-p.$$
Note that $R/I$ is Cohen-Macaulay if and only if $\depth (R/I)=\dim (R/I)$.

For a  monomial ideal $I$, let $\mathcal{G}(I)$ denote its unique minimal set of monomial generators.

\begin{Lemma}\label{betaneq0} Let $I$ be a monomial ideal. If $\beta_{i,\mathbf{a}}(R/I)\neq 0$ for some $i$, then there exist some  monomials $m_1,\ldots,m_s\in \mathcal{G}(I)$ such that $x^{\mathbf{a}}=\operatorname{lcm}(m_1,\ldots,m_s)$.
\end{Lemma}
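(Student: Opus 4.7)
The plan is to deduce the statement from the existence of Taylor's resolution. Write $\mathcal{G}(I) = \{g_1, \ldots, g_r\}$. Taylor's complex $T_\bullet$ is a finite $\NN^n$-graded free resolution of $R/I$ whose $i$-th module has a basis $\{e_\sigma \mid \sigma \subseteq [r], |\sigma| = i\}$, where the basis element $e_\sigma$ is placed in the multidegree $\mathbf{b}_\sigma \in \NN^n$ determined by $x^{\mathbf{b}_\sigma} = \operatorname{lcm}(g_j \mid j \in \sigma)$. Hence the set of multidegrees that appear in the $i$-th free module of $T_\bullet$ is exactly
$$\mathcal{M}_i = \{\mathbf{b} \in \NN^n \mid x^{\mathbf{b}} = \operatorname{lcm}(g_j \mid j \in \sigma) \text{ for some } \sigma \subseteq [r] \text{ with } |\sigma|=i\}.$$

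Next I invoke the standard uniqueness principle for minimal multigraded free resolutions: any $\NN^n$-graded free resolution of $R/I$ decomposes as the direct sum of the minimal free resolution $F_\bullet$ and a trivial complex (a direct sum of exact complexes of the form $0 \to R(-\mathbf{a}) \xrightarrow{1} R(-\mathbf{a}) \to 0$). Applying this to $T_\bullet$, we see that every multidegree $\mathbf{a}$ in the $i$-th module of $F_\bullet$, that is every $\mathbf{a}$ with $\beta_{i,\mathbf{a}}(R/I) \neq 0$, must also appear in the $i$-th module of $T_\bullet$. Therefore $\mathbf{a} \in \mathcal{M}_i$, and we may write $x^{\mathbf{a}} = \operatorname{lcm}(g_j \mid j \in \sigma)$ for some subset $\sigma$ of $\mathcal{G}(I)$, which is exactly the desired conclusion (taking $m_1, \ldots, m_s$ to be the elements of $\sigma$).

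There is no real obstacle here; the statement is a direct and well-known consequence of Taylor's construction. The only subtlety worth noting is the minimality argument: one must use that in the multigraded local setting no degree-zero units can appear in the differentials of the minimal resolution, which is what allows the trivial-complex summands to be peeled off $T_\bullet$ to reach $F_\bullet$. Alternatively, one could give the same argument via the Bayer--Charalambous--Popescu/upper-Koszul formula, which computes $\beta_{i,\mathbf{a}}(R/I)$ as the reduced simplicial homology of the complex of subsets $\tau \subseteq \operatorname{supp}(\mathbf{a})$ with $x^{\mathbf{a}}/x^{\tau} \in I$; non-vanishing of this homology again forces $x^{\mathbf{a}}$ to be a least common multiple of elements of $\mathcal{G}(I)$.
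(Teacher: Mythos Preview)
Your proof via Taylor's resolution is correct and is one of the standard arguments for this well-known fact. The paper does not actually supply a proof here; it simply cites \cite[Exercise 1.2]{MS}, so your write-up is more detailed than what the paper provides. Your approach (minimal resolution is a direct summand of Taylor's complex, hence its multidegrees are among the lcm's of subsets of $\mathcal{G}(I)$) is exactly the kind of argument that exercise is asking the reader to produce, and the alternative via the upper Koszul complex that you mention is equally valid.
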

\begin{proof}  See, for example, \cite[Exercise 1.2]{MS}.
\end{proof}

Now, given a monomial ideal $I$ and a degree $\mathbf{a} \in \mathbb{N}^n$, define
        	$$K^\mathbf{a}(I) = \{ \text{squarefree vectors } \boldsymbol{\tau}\in \{0,1\}^n| x^{\mathbf{a}-\boldsymbol{\tau}} \in I \}$$
        	to be the (upper) Koszul simplicial complex of $I$ in degree $\mathbf{a}$.

        \begin{Lemma}{\em (\cite[Theorem 1.34]{MS})}\label{betti1}
        	Given a vector $\mathbf{a} \in \mathbb{N}^n$, the Betti number of $R/I$ in degree $\ab$ can be expressed as
        	\[
        	\beta_{i,\mathbf{a}}(R/I) = \dim_K \tilde{H}_{i-2}(K^\mathbf{a}(I); K),
        	\]
        	where $\tilde{H}_{i-2}(K^\mathbf{a}(I); K)$ is the  $(i-2)$-th  reduced homology of $K^\mathbf{a}(I)$ over $K$.
        \end{Lemma}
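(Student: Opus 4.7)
The plan is to compute $\beta_{i,\mathbf{a}}(R/I) = \dim_K \Tor^R_i(R/I, K)_{\mathbf{a}}$ using the Koszul complex on the variables $x_1,\ldots, x_n$ as a multigraded minimal free resolution of the residue field $K = R/(x_1,\ldots, x_n)$. Denote this Koszul complex by $\mathbf{E}_\bullet$, with $\mathbf{E}_j = \bigoplus_{|\sigma|=j} R \cdot e_\sigma$ ranging over squarefree vectors $\sigma \in \{0,1\}^n$ of weight $j$, $\deg e_\sigma = \sigma$, and the standard differential $d(e_\sigma) = \sum_{i \in \sigma} \pm\, x_i\, e_{\sigma \setminus i}$.

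The first step is to reduce from $R/I$ to $I$ via the short exact sequence $0 \to I \to R \to R/I \to 0$. The long exact sequence of $\Tor^R_\bullet(-, K)$, combined with the vanishing $\Tor^R_i(R, K) = 0$ for $i \geqslant 1$, yields $\Tor^R_i(R/I, K) \cong \Tor^R_{i-1}(I, K)$ for every $i \geqslant 2$, together with a low-degree piece that handles $i = 0, 1$ directly. So the essential computation is that of $\Tor^R_{i-1}(I, K)_{\mathbf{a}}$.

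Next, observe that the $\mathbf{a}$-graded strand $(\mathbf{E}_\bullet \otimes_R I)_{\mathbf{a}}$, in homological degree $j$, has a natural $K$-basis indexed by squarefree vectors $\sigma$ of weight $j$ with $\sigma \leqslant \mathbf{a}$ and $x^{\mathbf{a}-\sigma} \in I$, which is precisely the set of $(j-1)$-dimensional faces of $K^{\mathbf{a}}(I)$. A direct inspection shows that under this bijection the Koszul differential transports, up to a uniform sign, to the reduced simplicial boundary: both maps send a face $\sigma$ to the alternating sum of its codimension-one subfaces, and the monomial coefficients $x^{\mathbf{a}-\sigma}\cdot x_i = x^{\mathbf{a}-(\sigma\setminus i)}$ match the face labels on the nose. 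Consequently $(\mathbf{E}_\bullet \otimes_R I)_{\mathbf{a}}$ is isomorphic to the shifted reduced chain complex $\tilde{C}_{\bullet - 1}(K^{\mathbf{a}}(I); K)$, and taking homology gives $\Tor^R_{i-1}(I, K)_{\mathbf{a}} \cong \tilde{H}_{i-2}(K^{\mathbf{a}}(I); K)$, the desired formula.

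The main obstacle is bookkeeping. It is essential to tensor the Koszul complex with $I$ rather than with $R/I$, because the latter picks out the complementary \emph{lower Koszul} complex $\{\sigma : x^{\mathbf{a}-\sigma}\notin I\}$ instead of $K^{\mathbf{a}}(I)$; passing between the two requires an Alexander-duality step that is easy to mis-index. One must also fix the sign conventions of the Koszul and reduced simplicial boundaries coherently, and handle the degenerate indices $i = 0, 1$ by hand, using the standard conventions $\tilde{H}_{-1}(\{\emptyset\}) = 0$ and $\tilde{H}_{-1}(\emptyset) = K$. Once these choices are pinned down, the remainder is a routine piece of homological algebra.
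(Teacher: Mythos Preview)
Your sketch is correct and is essentially the standard argument from Miller--Sturmfels \cite[Theorem~1.34]{MS}: resolve $K$ by the Koszul complex, tensor with $I$, and identify the $\mathbf{a}$-graded strand with the (shifted) reduced chain complex of $K^{\mathbf{a}}(I)$. The paper itself gives no proof of this lemma at all; it simply quotes the result from \cite{MS}, so there is nothing to compare beyond noting that your argument is precisely the one in the cited reference.
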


\section{Cohen-Macaulayness of symbolic powers of edge ideals} \label{sec:symbolic powers}

In this section, we will assume that $\D$ is a weighted oriented graph with an underlying graph $G$ and a vertex set $[n]$. Our goal is to characterize $\D$ such that $I(\D)^{(t)}$ is Cohen-Macaulay for all $t\geqslant 1$.

\begin{Lemma}\label{degree complex3} Let $\D$ be a weighted oriented graph. For any $\ab=(a_1,\ldots,a_n) \in \mathbb{N}^n$ and $t\geqslant 1$, let
$$\mathcal C = \{C\in \Gamma(G)\mid \sum\limits_{i\in L_{1}(C)} a_{i}+\sum\limits_{j\in C\backslash L_{1}(C) } \left\lfloor \frac{a_{j}}{\omega(j)}\right\rfloor \leqslant t-1\}.$$
Then
    $$\sqrt{I(\D)^{(t)} \colon x^{\ab}}=\bigcap_{C\in\mathcal C} (x_i\mid i\in C),$$
and
    $$\mathcal F(\Delta_{\ab}(I(\D)^{(t)}) = \{S\in\mathcal F(\Delta(G)) \mid [n] \setminus S \in \mathcal C\}.$$
\end{Lemma}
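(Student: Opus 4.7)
The plan is to start from the primary decomposition
$$I(\D)^{(t)}=\bigcap_{C\in\Gamma(G)} I_C^t$$
provided by Lemma \ref{SPowers}, take the colon with $x^{\ab}$, and then pass to radicals. For each $C\in\Gamma(G)$, the ideal $I_C$ is generated by pure powers of distinct variables indexed by $C$, hence $I_C$ is $P_C$-primary with $P_C=(x_i\mid i\in C)$, and so is $I_C^t$. The first step is to characterize when $x^{\ab}\in I_C^t$: a generator of $I_C^t$ has the form $\prod_{i\in L_1(C)} x_i^{b_i}\prod_{j\in C\setminus L_1(C)} x_j^{\omega(j)c_j}$ with $\sum b_i+\sum c_j=t$, and such a monomial divides $x^{\ab}$ exactly when $b_i\leqslant a_i$ and $c_j\leqslant \lfloor a_j/\omega(j)\rfloor$. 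Hence such a factorization exists if and only if
$$\sum_{i\in L_1(C)} a_i+\sum_{j\in C\setminus L_1(C)}\lfloor a_j/\omega(j)\rfloor\geqslant t,$$
equivalently $C\notin\mathcal C$.

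Next, for $C\notin\mathcal C$ one has $I_C^t:x^{\ab}=R$, while for $C\in\mathcal C$ the colon $I_C^t:x^{\ab}$ is again $P_C$-primary (since $I_C^t$ is $P_C$-primary and $x^{\ab}\notin I_C^t$), so $\sqrt{I_C^t:x^{\ab}}=P_C$. Using that radicals commute with intersections, this yields
$$\sqrt{I(\D)^{(t)}:x^{\ab}}=\bigcap_{C\in\Gamma(G)}\sqrt{I_C^t:x^{\ab}}=\bigcap_{C\in\mathcal C}(x_i\mid i\in C),$$
which is the first identity.

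For the second identity, I would invoke Lemma \ref{degree complex2} to identify the ideal just computed with $I_{\Delta_{\ab}(I(\D)^{(t)})}$ and then apply the Stanley-Reisner correspondence: the facets of $\Delta_{\ab}(I(\D)^{(t)})$ are the complements of the minimal primes of $\bigcap_{C\in\mathcal C}P_C$. Because $\mathcal C\subseteq\Gamma(G)$ consists of \emph{minimal} vertex covers of $G$, any two distinct elements of $\mathcal C$ are incomparable under inclusion, so every $P_C$ with $C\in\mathcal C$ is already a minimal prime of the intersection. Consequently the facets of $\Delta_{\ab}(I(\D)^{(t)})$ are exactly the sets $[n]\setminus C$ with $C\in\mathcal C$, which are precisely the facets $S$ of $\Delta(G)$ satisfying $[n]\setminus S\in\mathcal C$.

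The main technical point is the membership criterion for $x^{\ab}\in I_C^t$; once this combinatorial optimization is settled, the remainder of the argument reduces to formal manipulations with primary ideals and the Stanley-Reisner dictionary.
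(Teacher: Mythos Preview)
Your proposal is correct and follows essentially the same approach as the paper: start from the decomposition $I(\D)^{(t)}=\bigcap_{C\in\Gamma(G)} I_C^t$ of Lemma~\ref{SPowers}, characterize membership $x^{\ab}\in I_C^t$ via the displayed inequality, take radicals of the colon ideals, and then invoke Lemma~\ref{degree complex2} for the facet description. Your write-up is in fact a bit more explicit than the paper's in justifying the membership criterion and in spelling out why distinct elements of $\mathcal C$ give incomparable primes, but the underlying argument is the same.
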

	
\begin{proof}
By Lemma \ref{SPowers}, we have
$$I(D)^{(t)} = \bigcap_{C\in {\Gamma(G)}} I_C^t.$$
Now for any  $C\in\Gamma(G)\subseteq\Gamma(\D)$, since
$$I_C = (x_i, x_j^{\omega(j)}\mid i\in L_1(C), \ j\in C\setminus L_1(C)),$$
it follows that $x^{\ab}\notin I_C^t$ if and only if
\begin{equation}\label{EQ1}
\sum\limits_{i\in L_{1}(C)} a_{i}+\sum\limits_{j\in C\backslash L_{1}(C) } \left\lfloor\frac{a_{j}}{\omega(j)}\right\rfloor \leq t-1.
\end{equation}

Note that $\sqrt{I_C^t \colon x^{\ab}} = (x_k\mid k\in C)$ if $x^{\ab}\notin I_C^t$, together with $(\ref{EQ1})$ it yields
$$\sqrt{I(\D)^{(t)} \colon x^{\ab}}=\bigcap_{C\in \mathcal C} (x_i\mid i\in C).$$

The second equality of the lemma follows from this equality and from Lemma \ref{degree complex2}. Thus, the proof is complete.
\end{proof}

\begin{Lemma}\label{a=2}
Let $D$ be a weighted oriented graph with $\alpha(G)=2$, and let $I(D)^{(t)}$ be Cohen-Macaulay for all $t\ge 1$, then $\Delta(G)$ is a matroid.
\end{Lemma}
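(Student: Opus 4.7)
\emph{Plan.} We argue by contradiction using the degree complex machinery. First, since $I(D)^{(1)} = I(D)$ is Cohen-Macaulay, Lemma \ref{radical} gives that $\sqrt{I(D)} = I(G)$ is Cohen-Macaulay, so $\Delta(G)$ is pure; combined with $\alpha(G) = 2$ every maximal independent set of $G$ has exactly two vertices. Assume $\Delta(G)$ is not a matroid. Because $\Delta(G)$ is pure of dimension $1$, the exchange property fails only at a facet $\{u, v\}$ and a vertex $w \notin \{u, v\}$ with $\{u, w\}, \{v, w\} \notin \Delta(G)$; equivalently, $uv \notin E(G)$ and $uw, vw \in E(G)$. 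Since $\{w\}$ cannot be maximal (purity together with $\alpha(G) = 2$), there exists $w_0 \in V(G) \setminus \{w\}$ with $ww_0 \notin E(G)$, making $\{w, w_0\}$ a second facet. Applying $\alpha(G) = 2$ to $\{u, v, w_0\}$ and using $uv \notin E(G)$, one of $uw_0, vw_0$ lies in $E(G)$; after swapping $u$ and $v$ if necessary, we may assume $uw_0 \in E(G)$.

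The core step is to build a degree vector $\ab \in \NN^n$ and an integer $t \geq 2$ so that $\Delta_\ab(I(D)^{(t)})$ is one-dimensional and has $\{u, v\}$ and $\{w, w_0\}$ as facets lying in distinct connected components; Lemma \ref{cmcomplex} together with Lemma \ref{cm} will then contradict the Cohen-Macaulayness hypothesis. We will set $a_i = 0$ for every $i \notin \{u, w\}$ and choose $a_u, a_w, t$ so that the minimal vertex covers $C_1 := V(G) \setminus \{u, v\}$ and $C_2 := V(G) \setminus \{w, w_0\}$ both lie in the set $\mathcal{C}$ of Lemma \ref{degree complex3}, whereas every minimal vertex cover whose complement is a facet of $\Delta(G)$ that would bridge $\{u, v\}$ and $\{w, w_0\}$ is pushed out of $\mathcal{C}$. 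The key asymmetry is: $u \notin C_1$ and $w \notin C_2$, so only $a_w$ contributes to the linear functional at $C_1$ and only $a_u$ at $C_2$, while any bridging cover contains both $u$ and $w$ and receives contributions from both $a_u$ and $a_w$; by taking $a_u$ and $a_w$ large enough relative to $t$, the bridging sums can be forced strictly above $t - 1$.

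The main obstacle will be handling bridging facets other than the direct bridge $\{v, w_0\}$ (present when $vw_0 \notin E(G)$): in particular, two-step bridges through a vertex $z \in V(G) \setminus \{u, v, w, w_0\}$ that is non-adjacent to both $u$ and $w$, for which both $\{u, z\}$ and $\{z, w\}$ are facets of $\Delta(G)$. Ruling these out requires arranging that at least one of $V(G) \setminus \{u, z\}$ or $V(G) \setminus \{z, w\}$ violates the inequality, and this depends sensitively on whether $u$ (respectively $w$) lies in $L_1$ of that cover, i.e., on the orientation of $\D$ at $u$ and $w$, and on the source convention $\omega(\cdot) = 1$. Managing all the orientation subcases, by letting $t$ grow to absorb floor corrections, enlarging the support of $\ab$ to include $v$ or $w_0$, or, in extremal configurations, re-extracting the exchange obstruction from a different triple of vertices, is the principal technical burden of the proof.
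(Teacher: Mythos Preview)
Your overall plan---produce a disconnected one-dimensional degree complex $\Delta_{\ab}(I(\D)^{(t)})$ and invoke Lemmas~\ref{cmcomplex} and~\ref{cm}---is the paper's strategy, but there is a genuine gap precisely where you flag the ``main obstacle.'' With $\supp(\ab)=\{u,w\}$ the two-step bridge through a vertex $z$ with $uz,wz\notin E(G)$ can \emph{never} be cut, irrespective of orientations or the size of $t$. Indeed, the sum of Lemma~\ref{degree complex3} at the cover $V\setminus\{u,z\}$ is just the contribution of $w$; since $wz\notin E(G)$ one has $w\in L_1(V\setminus\{u,z\})$ only when $(w,u)\in E(\D)$, whereas $w\in L_1(V\setminus\{u,v\})$ whenever $(w,u)\in E(\D)$ \emph{or} $(w,v)\in E(\D)$. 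Hence the contribution of $w$ at $V\setminus\{u,z\}$ is always at most its contribution at $V\setminus\{u,v\}$, so $\{u,z\}\in\Delta_{\ab}$ whenever $\{u,v\}\in\Delta_{\ab}$. The symmetric argument gives $\{z,w\}\in\Delta_{\ab}$ whenever $\{w,w_0\}\in\Delta_{\ab}$, and the path $\{u,v\}\text{--}\{u,z\}\text{--}\{z,w\}\text{--}\{w,w_0\}$ survives. Your proposed remedies (enlarge the support, re-extract from another triple) are gestures in the right direction, but without structural control on which bridges can occur they do not give a finite case analysis.

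The paper obtains that control by a shortest-path reduction inside $\Delta(G)$ (viewed as a graph on $[n]$): from your $w$ (their $v_0$) take a shortest $\Delta(G)$-path $v_0v_1\cdots v_sj$ to $\{i,j\}$. Minimality forces $v_{s-1}i,\,v_{s-1}j\in E(G)$, and triangle-freeness of $\Delta(G)$ forces $v_si\in E(G)$, while $\{v_s,j\}$ remains a facet of $\Delta(G)$. The paper then supports $\ab$ on \emph{all four} of $i,j,v_{s-1},v_s$ rather than on the ``opposite'' pair: the point is that for every $\ell\notin\{i,j,v_{s-1},v_s\}$ the cover $V\setminus\{i,\ell\}$ contains $j$, so the extra $a_j$-term (in its worst-case floored form) separates it from the cover $V\setminus\{i,j\}$. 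This isolates one of the two target facets in $\Delta_{\ab}$ after a four-case analysis (up to symmetry) on the orientations of the three $G$-edges $v_si,\,v_{s-1}i,\,v_{s-1}j$, with explicit values of $a_i,a_j,a_{v_{s-1}},a_{v_s},t$ in each case. Both ingredients---the shortest-path pinning of the local configuration, and placing weight on both endpoints of the target facet rather than on $\{u,w\}$---are what your outline lacks.
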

\begin{proof} Let $I=I(D)$,  then  $I(G) = \sqrt{I^{(t)}}$ for  any $t\geqslant 1$. Together with Lemma \ref{radical}, this implies that $\Delta(G)$ is Cohen-Macaulay, so $\Delta(G)$ is pure and $\dim (\Delta(G)) = 1$. 
Thus  $\Delta(G)$ be regarded as a simple graph with a vertex set $[n]$ and an edge set $\mathcal F(\Delta(G))$. By  Lemma \ref{cmcomplex}, $\Delta(G)$ is connected. For simplicity, we will also denote this graph by $\Delta(G)$ when there is no  confusion. Note that $\Delta(G)$ is a triangle-free graph because if $\Delta(G)$ had a triangle, then the three  vertices on the triangle would be an independent set of $G$, so $\alpha(G) \geqslant 3$, which is a contradiction.

We will now prove that the simplicial complex $\Delta(G)$ is a matroid complex. Suppose by contradiction that $\Delta(G)$ is not a matroid complex. According to the definition of matroid complexes, there are distinct vertices $i$ and $j$ such that  $\{i,j\} \in \Delta(G)$, as well as a vertex $v_0 \in [n]\setminus\{i,j\}$ such that $\{v_0,i\} \notin \Delta(G)$ and $\{v_0,j\} \notin \Delta(G)$. Since $\Delta(G)$ is connected, there exists a path $P = v_0v_1\cdots v_s v_{s+1}$ of the shortest possible length in $\Delta(G)$ with $v_{s+1}\in\{i,j\}$. Without loss of generality, assume that $v_{s+1} = j$. By the minimal length  of $P$, we have $\{v_{s-1}, j\},\{v_{s-1},i\} \notin \Delta(G)$. Furthermore, $\{v_s, i\} \notin \Delta(G)$ because $\Delta(G)$ is triangle-free. Therefore, $\{v_s, i\}$, $\{v_{s-1}, i\}$, and $\{v_{s-1}, j\}$ are edges in $G$. By symmetry,  there are four possible cases depending on the direction of the edges  in $\D$ (see Figure 1):

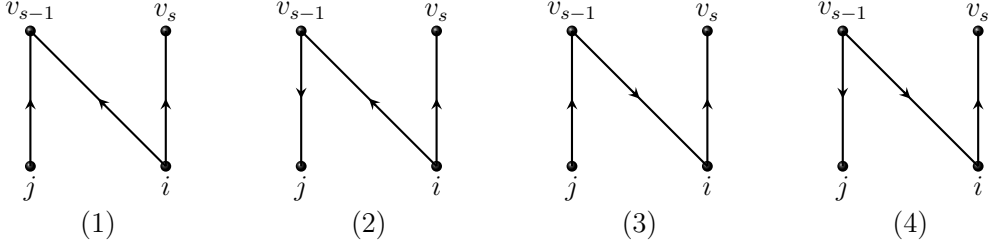
\begin{figure}[htbp]
      	\centering
      	\begin{tikzpicture}[thick, scale=0.9, every node/.style={scale=0.9}]
      		\def\nodeA{(0,2)}    
      		\def\nodeB{(0,0)}    
      		\def\nodeC{(2,0)}    
      		\def\nodeD{(2,2)}    
      		
      		\begin{scope}[shift={(0,0)}]
      			\shade[shading=ball, ball color=black] \nodeA circle (.08) node[above] {${v_{s-1}}$};
      			\shade[shading=ball, ball color=black] \nodeB circle (.08) node[below] {$j$};
      			\shade[shading=ball, ball color=black] \nodeC circle (.08) node[below] {$i$};
      			\shade[shading=ball, ball color=black] \nodeD circle (.08) node[above] {${v_s}$};
      			
      			\draw (0,0) -- (0,2);
      			\draw[->, >=stealth] (0,0.9) -- (0,1);
      			\draw (0,2) -- (2,0);
      		    \draw[->, >=stealth] (1.1,0.9) -- (1,1);
      			\draw (2,0) -- (2,2);
      	    	\draw[->, >=stealth] (2,0.9) -- (2,1);

      			\node[below] at (1,-0.5) {(1)};
      		\end{scope}
      		
      		\begin{scope}[shift={(4,0)}]  
      			\shade[shading=ball, ball color=black] \nodeA circle (.08) node[above] {${v_{s-1}}$};
      			\shade[shading=ball, ball color=black] \nodeB circle (.08) node[below] {$j$};
      			\shade[shading=ball, ball color=black] \nodeC circle (.08) node[below] {$i$};
      			\shade[shading=ball, ball color=black] \nodeD circle (.08) node[above] {${v_s}$};
      			
      			\draw (0,0) -- (0,2);
      			\draw[->, >=stealth] (0,1.1) -- (0,1);
      			\draw (0,2) -- (2,0);
      			\draw[->, >=stealth] (1.1,0.9) -- (1,1);
      			\draw (2,0) -- (2,2);
      			\draw[->, >=stealth] (2,0.9) -- (2,1);
      			
      			\node[below] at (1,-0.5) {(2)};
      		\end{scope}
      		
      		\begin{scope}[shift={(8,0)}]  
      			\shade[shading=ball, ball color=black] \nodeA circle (.08) node[above] {${v_{s-1}}$};
      			\shade[shading=ball, ball color=black] \nodeB circle (.08) node[below] {$j$};
      			\shade[shading=ball, ball color=black] \nodeC circle (.08) node[below] {$i$};
      			\shade[shading=ball, ball color=black] \nodeD circle (.08) node[above] {${v_s}$};
      			
      			\draw (0,0) -- (0,2);
      			\draw[->, >=stealth] (0,0.9) -- (0,1);
      			\draw (0,2) -- (2,0);
      			\draw[->, >=stealth] (0.9,1.1) -- (1,1);
      			\draw (2,0) -- (2,2);
      			\draw[->, >=stealth] (2,0.9) -- (2,1);
      			
      			\node[below] at (1,-0.5) {(3)};
      		\end{scope}
      		
      		\begin{scope}[shift={(12,0)}]  
      			\shade[shading=ball, ball color=black] \nodeA circle (.08) node[above] {${v_{s-1}}$};
      			\shade[shading=ball, ball color=black] \nodeB circle (.08) node[below] {$j$};
      			\shade[shading=ball, ball color=black] \nodeC circle (.08) node[below] {$i$};
      			\shade[shading=ball, ball color=black] \nodeD circle (.08) node[above] {${v_s}$};
      			
      			\draw (0,0) -- (0,2);
      		\draw[->, >=stealth] (0,1.1) -- (0,1);
      		\draw (0,2) -- (2,0);
      		\draw[->, >=stealth] (0.9,1.1) -- (1,1);
      		\draw (2,0) -- (2,2);
      		\draw[->, >=stealth] (2,0.9) -- (2,1);
      			
      			\node[below] at (1,-0.5) {(4)};
      		\end{scope}
      	\end{tikzpicture}
      	\caption{The possible directions in $\D$.}
      \end{figure}

{\it Case $1$}: $(j,v_{s-1}),(i,v_s),(i,v_{s-1})\in E(\D)$. Fix an integer $k\ge \max\{\omega(i),\omega(j)\}$ and set $a_i=a_j=k$, $a_{v_s}=0$, $a_{v_{s-1}}=2k\omega({v_{s-1}})$, $t=2k+1$ and $a_{\ell}=0$ for any $\ell\in[n]\backslash\{i,j,v_{s-1},v_s\}$. Then, the following system of inequalities holds:
           \[
           \begin{cases}
           	\lfloor\frac{a_{v_s}}{\omega({v_s})}\rfloor+\lfloor\frac{a_{v_{s-1}}}{\omega({v_{s-1}})}\rfloor\le t-1, &\\ 	
           	a_i+a_j\le t-1, &\\
        	a_i+\lfloor\frac{a_{v_{s-1}}}{\omega({v_{s-1}})}\rfloor\ge t, &\\
           	\lfloor\frac{a_{i}}{\omega({i})}\rfloor+\lfloor\frac{a_{v_s}}{\omega({v_s})}\rfloor+\lfloor\frac{a_{v_{s-1}}}{\omega({v_{s-1}})}\rfloor\ge t, &\\ 	
           	\lfloor\frac{a_{j}}{\omega({j})}\rfloor+\lfloor\frac{a_{v_s}}{\omega({v_s})}\rfloor+\lfloor\frac{a_{v_{s-1}}}{\omega({v_{s-1}})}\rfloor\ge t. &\\ 	
           \end{cases}
           \]
           By Lemma \ref{degree complex3},   $\{i,j\},\{v_{s-1},v_s\}\in \Delta_{\mathbf{a}}(I^{(t)})$ and  $\{v_s,j\},\{i,\ell\},\{j,\ell\}\notin \Delta_{\mathbf{a}}(I^{(t)})$ for all $\ell\in [n]\backslash\{i,j,v_{s-1},v_s\}$. These conditions imply that $\Delta_{\mathbf{a}}(I^{(t)})$ is disconnected. However, by Lemmas \ref{cm} and \ref{degree complex2},
            $\Delta_{\mathbf{a}}(I^{(t)})$ is a Cohen-Macaulay,  so according to Lemma \ref{cmcomplex},  $\Delta_{\mathbf{a}}(I^{(t)})$ is connected  as $\dim (\Delta_{\mathbf{a}}(I^{(t)}) )> 0$, which is a contradiction. Therefore, $\Delta(G)$ is a matroid complex.

\medskip

{\it Case $2$}: $(v_{s-1},j),(i,v_s),(i,v_{s-1})\in E(\D)$. Set $k=\omega({v_{s-1}})(\omega(i)+1)-\omega(i)$, $a_i=\omega(i)+1$, $a_j=k\omega(j)$, $a_{v_{s}}=\omega({v_s})$, $a_{v_{s-1}}=\omega({v_{s-1}})(\omega(i)+1)$, $t=\omega({v_{s-1}})(\omega(i)+1)+2$ and $a_{\ell}=0$ for any $\ell\in[n]\backslash\{i,j,v_{s-1},v_s\}$. Then, the following system of inequalities holds:
           \[
           \begin{cases}
           	\lfloor\frac{a_{v_s}}{\omega({v_s})}\rfloor+a_{v_{s-1}}\le t-1, &\\ 	
           	a_i+\lfloor\frac{a_{j}}{\omega({j})}\rfloor\le t-1, &\\
           	a_i+a_{v_{s-1}}\ge t, &\\ 	
           	\lfloor\frac{a_{i}}{\omega({i})}\rfloor+\lfloor\frac{a_{v_s}}{\omega({v_s})}\rfloor+a_{v_{s-1}}\ge t, &\\ 	
           	\lfloor\frac{a_{j}}{\omega({j})}\rfloor+\lfloor\frac{a_{v_s}}{\omega({v_s})}\rfloor+\lfloor\frac{a_{v_{s-1}}}{\omega({v_{s-1}})}\rfloor\ge t. &\\ 	
           \end{cases}
           \]
           By Lemma \ref{degree complex3},  $\{i,j\},\{v_{s-1},v_s\}\in \Delta_{\mathbf{a}}(I^{(t)})$ and  $\{v_s,j\},\{i,\ell\},\{j,\ell\}\notin \Delta_{\mathbf{a}}(I^{(t)})$ for all $\ell\in [n]\backslash\{i,j,v_{s-1},v_s\}$. These conditions imply that $\Delta_{\mathbf{a}}(I^{(t)})$ is disconnected, a contradiction. Therefore, $\Delta(G)$ is a matroid complex.

\medskip
{\it Case $3$}: $(j,v_{s-1}),(v_s,i),(i,v_{s-1})\in E(\D)$. We define $a_i=\omega(i)(\omega({v_s})+1)$, $a_j=\omega(j)$, $a_{v_{s}}=\omega({v_{s}})$,  $a_{v_{s-1}}=\omega({v_{s-1}})(a_i+a_j-a_{v_s})$, $t=a_i+a_j+1$ and $a_{\ell}=0$ for any $\ell\in[n]\backslash\{i,j,v_{s-1},v_s\}$. Then, the following system of inequalities holds:
           \[
           \begin{cases}
           	a_{v_{s}}+\lfloor\frac{a_{v_{s-1}}}{\omega({v_{s-1}})}\rfloor\le t-1, &\\ 	
           	a_i+a_j\le t-1, &\\
           	\lfloor\frac{a_{i}}{\omega({i})}\rfloor+\lfloor\frac{a_{v_{s-1}}}{\omega({v_{s-1}})}\rfloor\ge t, &\\ 	
           	\lfloor\frac{a_{i}}{\omega({i})}\rfloor+\lfloor\frac{a_{v_s}}{\omega({v_s})}\rfloor+\lfloor\frac{a_{v_{s-1}}}{\omega({v_{s-1}})}\rfloor\ge t, &\\ 	
           	\lfloor\frac{a_{j}}{\omega({j})}\rfloor+a_{v_s}+\lfloor\frac{a_{v_{s-1}}}{\omega({v_{s-1}})}\rfloor\ge t. &\\ 	
           \end{cases}
           \]
           By Lemma \ref{degree complex3},  $\{i,j\},\{v_{s-1},v_s\}\in \Delta_{\mathbf{a}}(I^{(t)})$ and  $\{v_s,j\},\{i,\ell\},\{j,\ell\}\notin \Delta_{\mathbf{a}}(I^{(t)})$ for all $\ell\in [n]\backslash\{i,j,v_{s-1},v_s\}$. These conditions imply that $\Delta_{\mathbf{a}}(I^{(t)})$ is disconnected, a contradiction. Therefore, $\Delta(G)$ is a matroid complex.

\medskip
{\it Case $4$}: $(v_{s-1},j),(v_s,i),(i,v_{s-1})\in E(\D)$. Fix an integer $k$ such that $$k \ge \max\{ (\omega(i)\omega({v_s}) + \omega(i) - \omega({v_s}))(\omega({v_{s-1}}) - 1), 1 \}.$$ 
Set $a_i=\omega(i)(\omega({v_s})+1)$, $a_j=k\omega({j})$, $a_{v_s}=\omega({v_s})$, $a_{v_{s-1}}=k+a_i-\omega({v_s})$, $t=k+a_i+1$ and $a_{\ell}=0$ for any $\ell\in[n]\backslash\{i,j,v_{s-1},v_s\}$. Then, the following system of inequalities holds:
           \[
           \begin{cases}
           	a_{v_{s}}+a_{v_{s-1}}\le t-1, &\\ 	
           	a_i+\lfloor\frac{a_{j}}{\omega(j)}\rfloor\le t-1, &\\
           	\lfloor\frac{a_{i}}{\omega(i)}\rfloor+a_{v_{s-1}}\ge t, &\\ 	
           	\lfloor\frac{a_{i}}{\omega({i})}\rfloor+\lfloor\frac{a_{j}}{\omega({j})}\rfloor+\lfloor\frac{a_{v_{s-1}}}{\omega({v_{s-1}})}\rfloor\ge t, &\\ 	
           	a_i+\lfloor\frac{a_{j}}{\omega({j})}\rfloor+\lfloor\frac{a_{v_{s}}}{\omega({v_{s}})}\rfloor\ge t. &\\ 	
           \end{cases}
           \]
            By Lemma \ref{degree complex3},  $\{i,j\},\{v_{s-1},v_s\}\in \Delta_{\mathbf{a}}(I^{(t)})$ and  $\{v_s,j\},\{v_s,\ell\},\{v_{s-1},\ell\}\notin \Delta_{\mathbf{a}}(I^{(t)})$  for all $\ell\in [n]\backslash\{i,j,v_{s-1},v_s\}$. These conditions imply that $\Delta_{\mathbf{a}}(I^{(t)})$ is disconnected, which is a contradiction. Therefore, $\Delta(G)$ is a matroid complex.

            In summary, all of the above cases demonstrate that $\Delta(G)$ is a matroid complex, and the lemma follows.
           \end{proof}

\medskip

For a monomial ideal $I$ of $R$ and $j\in [n]$, define $I[j] = I R[x_j^{-1}] \cap R$ as the localization of $I$ with respect to the variable $x_j$. Note that $I[j] = I : x_j^{\infty}$. The following two lemmas are obvious.
          
\begin{Lemma}\label{local}
Let $I$ and $J$ be two monomial ideals in $R$ and let $j \subseteq [n]$. Then
\begin{itemize}
\item[(1)] $(I \cap J)[j] = I[j] \cap J[j]$,
\item[(2)] $(I^t)[j] = (I[j])^t$ for all $t \geq 1$, and
\item[(3)] if $I$ is Cohen-Macaulay, then so is $I[j]$.
\end{itemize}
\end{Lemma}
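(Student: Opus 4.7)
The plan is to handle (1) and (2) by direct manipulations with monomials using the identity $I[j] = I : x_j^\infty$, and to tackle (3) via the paper's own criterion Lemma \ref{cm}, reducing the question to the standard fact that a cone over a Cohen--Macaulay simplicial complex is Cohen--Macaulay.

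For (1), a monomial $u$ lies in $(I \cap J)[j]$ iff $u x_j^k \in I \cap J$ for some $k \geqslant 0$, equivalently $u x_j^k \in I$ and $u x_j^k \in J$ for a common exponent; the inclusion $\subseteq$ is immediate, and $\supseteq$ follows by choosing the maximum of the two exponents witnessing $u \in I[j]$ and $u \in J[j]$. For (2), I would describe the minimal generators explicitly: $I[j]$ is generated by $\{\, m / x_j^{\nu_j(m)} \mid m \in \mathcal{G}(I)\,\}$, where $\nu_j(m)$ denotes the $x_j$-exponent of $m$. Since the minimal generators of $I^t$ are precisely the $t$-fold products of generators of $I$, applying the same stripping procedure to those products yields exactly the generators of $(I[j])^t$, so the two ideals coincide.

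For (3), which is the main obstacle, I would apply Lemma \ref{cm}: it suffices to show $I[j]$ is unmixed and $\sqrt{I[j] : g}$ is Cohen--Macaulay for every monomial $g \notin I[j]$. Unmixedness transfers because the minimal primary decomposition of $I[j]$ is obtained from that of $I$ by discarding the primary components whose radical contains $x_j$, which preserves the common height. For the second condition, the identity $I[j] : g = (I : g)[j]$ together with $\sqrt{J[j]} = (\sqrt{J})[j]$ for any monomial ideal $J$ (both sides are the squarefree supports of the $x_j$-free parts of the generators) reduces the task to the squarefree case: if $J = I_\Delta$ is a Cohen--Macaulay Stanley--Reisner ideal, then $J[j]$ is Cohen--Macaulay. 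A generator-level computation identifies $J[j]$ with the Stanley--Reisner ideal of the star $\st_\Delta(j) = \{j\} \ast \lk_\Delta(j)$, a cone over the link; since the link of a Cohen--Macaulay complex is Cohen--Macaulay by Reisner's criterion and a cone over a Cohen--Macaulay complex is Cohen--Macaulay, the claim follows. The delicate step is the chain of identifications $\sqrt{I[j]:g} = (\sqrt{I:g})[j] = I_{\st_\Delta(j)}$, which interleaves primary decomposition, radicals, and localization for possibly non-squarefree monomial ideals; each link is routine on its own but must be combined with care before Lemma \ref{cm} returns the Cohen--Macaulayness of $I[j]$.
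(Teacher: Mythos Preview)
Your proposal is correct. The paper itself offers no proof of this lemma---it is declared ``obvious''---so there is no argument to compare against directly.

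Two minor comments. In (2) you write that the minimal generators of $I^t$ are ``precisely'' the $t$-fold products of generators of $I$; in fact they are only a subset of those products, but since you need merely a generating set rather than the minimal one, the argument goes through unchanged.

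For (3), your route via Lemma~\ref{cm} and the star/link identification is valid, but it can be shortened considerably. Once you have $I[j]:g = (I:g):x_j^\infty = I:gx_j^k$ for $k$ sufficiently large, you obtain $\sqrt{I[j]:g} = \sqrt{I:gx_j^k}$ with $gx_j^k \notin I$ (since $g\notin I[j]$ means $gx_j^m\notin I$ for all $m$), and this radical is already Cohen--Macaulay by Lemma~\ref{cm} applied to $I$ itself; there is no need to reduce to the squarefree case or to invoke Reisner and the cone construction. More directly still, Lemma~\ref{depth} yields $\depth(R/I[j]) \geqslant \depth(R/I)$ because every $\sqrt{I[j]:g}$ with $g\notin I[j]$ already appears among the $\sqrt{I:f}$ with $f\notin I$; combining this with $\dim(R/I[j]) \leqslant \dim(R/I)$ and the Cohen--Macaulayness of $I$ finishes (3) in one line. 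Your detour through $\st_\Delta(j) = \{j\}\ast\lk_\Delta(j)$ is perfectly sound, just longer than necessary.
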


\begin{Lemma}\label{symbolic} Let $I\subseteq R$ be a monomial ideal. Then $(I^{(t)})[j]=(I[j])^{(t)}$ for all $t\ge 1$ and $j\in[n]$.
\end{Lemma}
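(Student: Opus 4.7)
The plan is to reduce both sides to the same intersection of primary components. Fix a minimal primary decomposition
$$I = Q_1\cap\cdots\cap Q_r\cap Q_{r+1}\cap\cdots\cap Q_s,$$
where $\Min(I)=\{\sqrt{Q_i}\mid 1\leqslant i\leqslant r\}$, so that by the characterization of symbolic powers for monomial ideals recalled in Section~\ref{sec:prelim} one has $I^{(t)}=Q_1^t\cap\cdots\cap Q_r^t$. The key elementary observation I will use is that for any primary monomial ideal $Q$ with $\sqrt{Q}=P$, localization at $x_j$ behaves trivially: $Q[j]=R$ if $x_j\in P$, and $Q[j]=Q$ if $x_j\notin P$.

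Next I would compute the left-hand side. By Lemma~\ref{local}(1),
$$(I^{(t)})[j]=\bigcap_{i=1}^{r}(Q_i^t)[j],$$
and by Lemma~\ref{local}(2) combined with the observation above, each factor $(Q_i^t)[j]$ equals $Q_i^t$ when $x_j\notin\sqrt{Q_i}$ and equals $R$ otherwise. Writing $J=\{i\leqslant r\mid x_j\notin\sqrt{Q_i}\}$, this gives
$$(I^{(t)})[j]=\bigcap_{i\in J}Q_i^t.$$

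For the right-hand side, the same principle applied to the primary decomposition of $I$ gives $I[j]=\bigcap_{i:\,x_j\notin\sqrt{Q_i}}Q_i$, which is a primary decomposition of $I[j]$. The critical step is to identify its minimal primary components as exactly $\{Q_i\mid i\in J\}$. This rests on the observation that the minimal primes of $I[j]$ are precisely the minimal primes of $I$ not containing $x_j$: if $\sqrt{Q_k}$ is embedded in $I$, so it strictly contains some minimal $\sqrt{Q_i}$ with $i\leqslant r$, then either $x_j\in\sqrt{Q_i}\subseteq\sqrt{Q_k}$ (both components disappear under localization) or $x_j\notin\sqrt{Q_k}$ (both survive and the containment $\sqrt{Q_i}\subsetneq\sqrt{Q_k}$ persists, so $\sqrt{Q_k}$ remains embedded). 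Hence
$$(I[j])^{(t)}=\bigcap_{i\in J}Q_i^t,$$
which matches the expression for $(I^{(t)})[j]$ computed above, proving the lemma.

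The main obstacle is the bookkeeping in the last paragraph — verifying that inverting $x_j$ cannot promote an embedded prime of $I$ to a minimal prime of $I[j]$ — but this is a short prime-avoidance style argument rather than a substantive difficulty. Everything else is a direct manipulation of primary decompositions using the two preceding lemmas.
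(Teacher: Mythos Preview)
Your proof is correct and follows essentially the same strategy as the paper: both arguments localize an irredundant primary decomposition, note that each primary component $Q_i$ either survives unchanged or becomes $R$ according to whether $x_j\notin\sqrt{Q_i}$, and then verify that an embedded prime surviving localization still sits above a surviving minimal prime. Your handling of the minimal-vs-embedded bookkeeping is in fact slightly cleaner than the paper's, which (harmlessly for its application but imprecisely in general) distinguishes minimal from embedded components by height rather than by inclusion.
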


\begin{proof} Assume that
$$I = Q_1 \cap Q_2 \cap \cdots \cap Q_k \cap Q_{k+1} \cap \cdots \cap Q_m \cap Q_{m+1} \cap \cdots \cap Q_s$$
is an irredundant primary decomposition of $I$, where
\begin{itemize}
\item $\operatorname{ht}(Q_i) = \operatorname{ht}(I)$ for $i = 1, \ldots, m$; and $\operatorname{ht}(Q_i) > \operatorname{ht}(I)$ for $i = m + 1, \ldots, s$.
\item $x_j\in Q_i$ for $i = 1, \ldots, k$; and $x_j\notin Q_i$ for $i = k + 1, \ldots, m$.
\end{itemize}
Let $P_i = \sqrt{Q_i}$ for $i=1,\ldots,s$. Observe that, for each $i$, $Q_i[j] = Q_i$ if $x_j\notin P_i$, and $Q_i[j] = R$ if $x_j\in P_i$. Therefore,
$$I[j]=(\bigcap\limits_{i=k+1}^{m}Q_i)\cap(\bigcap\limits_{\substack{i \ge m+1 \\ x_j\notin P_i}}Q_i).$$

For any $i \geq m+1$ with $x_j\notin P_i$, there exists an $l$ satisfying $k+1 \leq l \leq m$ and $P_l \subseteq P_i$, since  $P_i$ is an embedded prime of $I$. It follows that  $(I[j])^{(t)}=\bigcap\limits_{i=k+1}^{m}Q_i^t$.

Since $I^{(t)} = \bigcap\limits_{i=1}^{m} Q_i^t$, we have $(I^{(t)})[j]=\bigcap\limits_{i=k+1}^{m}Q_i^t$. Therefore, $(I^{(t)})[j]=(I[j])^{(t)}$, as required.
\end{proof}
      
  For a monomial $f$ in $R$, its support, $\operatorname{supp}(f)$, is the set of all variables  that appear in $f$. In other words, $\operatorname{supp}(f) = \{ x_j  | x_j \text{ divides } f \}$.

\begin{Lemma}\label{ngxv} Let $\D$ be a weighted oriented graph.  For any $t\ge 1$, if $I(\D)^{(t)}$ is Cohen-Macaulay, then $I(\D\backslash N_{\D}[v])^{(t)}$ is also Cohen-Macaulay for all $v\in[n]$.
\end{Lemma}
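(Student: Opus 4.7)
I will verify the two conditions of Lemma \ref{cm} for $I(H)^{(t)}$, where $H=\D\setminus N_\D[v]$ has underlying graph $G'=G\setminus N_G[v]$ and vertex set $V(H)=[n]\setminus N_G[v]$; write $R'=K[x_u:u\in V(H)]$ and $P=(x_i:i\in N_G(v))\subseteq R$. Unmixedness of $I(H)^{(t)}$ is immediate: $I(\D)^{(t)}$ being Cohen--Macaulay forces $G$ to be well-covered, and then Lemma \ref{well-covered} applied with $S=\{v\}$ gives that $G'$ is well-covered, so all minimal primes of $I(H)^{(t)}$ share a common height.

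For the second condition, fix a monomial $x^{\ab'}\notin I(H)^{(t)}$ with $\ab'\in\NN^{V(H)}$ and extend $\ab'$ to $\ab\in\NN^n$ by setting $a_u=a'_u$ for $u\in V(H)$, $a_u=0$ for $u\in N_\D(v)$, and $a_v=t\omega(v)$. Applying Lemma \ref{degree complex3} to $\D$ at $\ab$, the choice $a_v=t\omega(v)$ forces $v\notin C$ for every $C\in\mathcal{C}(\ab)$ (otherwise the contribution of $v$ to the defining sum is either $a_v=t\omega(v)$ or $\lfloor a_v/\omega(v)\rfloor=t$, both $\geq t$); thus $N_G(v)\subseteq C$ and, since $a_u=0$ there, the vertices of $N_G(v)$ contribute nothing. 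Writing $D:=C\setminus N_G(v)\subseteq V(H)$, the correspondence $C=N_G(v)\cup D\leftrightarrow D$ is a bijection $\{C\in\Gamma(G):v\notin C\}\leftrightarrow\Gamma(H)$, and a short orientation-theoretic check yields $L_1(C)\cap D=L_1^H(D)$: any out-neighbor in $\D$ of $d\in V(H)$ must avoid $v$ (since $d\notin N_\D(v)$), and any out-neighbor lying in $N_G(v)$ is inside $C$, so $d$ has an out-neighbor outside $C$ iff it has one in $V(H)\setminus D$. This identifies
$$\mathcal{C}(\ab)=\{N_G(v)\cup D : D\in\mathcal{C}_H(\ab')\},$$
where $\mathcal{C}_H(\ab')$ is the set from Lemma \ref{degree complex3} applied to $H$.

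The monomial-ideal identity $\bigcap_D(P+Q_D)=P+\bigcap_D Q_D$ together with Lemma \ref{degree complex3} for $H$ now gives
$$\sqrt{I(\D)^{(t)}:x^{\ab}}=P+\sqrt{I(H)^{(t)}:x^{\ab'}},$$
so $R/\sqrt{I(\D)^{(t)}:x^{\ab}}\cong K[x_v]\otimes_K R'/\sqrt{I(H)^{(t)}:x^{\ab'}}$. Because $x^{\ab'}\notin I(H)^{(t)}\iff\mathcal{C}_H(\ab')\neq\emptyset\iff\mathcal{C}(\ab)\neq\emptyset\iff x^{\ab}\notin I(\D)^{(t)}$, Lemma \ref{cm} applied to the Cohen--Macaulay ideal $I(\D)^{(t)}$ makes the left-hand side Cohen--Macaulay, and since $K[x_v]$ is a polynomial ring the tensor factor $R'/\sqrt{I(H)^{(t)}:x^{\ab'}}$ is Cohen--Macaulay as well. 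Combined with unmixedness, Lemma \ref{cm} yields $I(H)^{(t)}$ Cohen--Macaulay. The main obstacle is the combinatorial identity $L_1(C)\cap D=L_1^H(D)$: it encodes how the orientation of $\D$ interacts with the operation $\D\mapsto\D\setminus N_\D[v]$, and it is precisely what allows the inequality defining $\mathcal{C}(\ab)$ to factor through the inequality defining $\mathcal{C}_H(\ab')$.
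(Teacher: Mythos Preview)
Your argument is correct and takes a genuinely different route from the paper's proof. The paper localizes at $x_v$, sets $J=I(\D)[v]$, and then spends considerable effort analysing the strong vertex covers of an auxiliary graph $\D'$ (obtained from $\D\setminus v$ by deleting all edges pointing into $N_\D(v)$); four separate claims are needed to match up $\Gamma(\D')$ with $\Gamma(G_v)$ and to obtain the key identity $\sqrt{J^{(t)}:x^{\ab}}=(x_{i_1},\dots,x_{i_s})+\sqrt{I(\D_v)^{(t)}:x^{\ab}}$. You bypass all of this by choosing $a_v=t\omega(v)$ directly, which kills every cover containing $v$ in the description of Lemma~\ref{degree complex3}, and then the bijection $\{C\in\Gamma(G):v\notin C\}\leftrightarrow\Gamma(G')$ together with the identity $L_1(C)\cap D=L_1^H(D)$ (which you justify cleanly) gives the same splitting $\sqrt{I(\D)^{(t)}:x^{\ab}}=P+\sqrt{I(H)^{(t)}:x^{\ab'}}$ with much less work. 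Your approach is shorter and stays entirely within the combinatorial framework of Lemma~\ref{degree complex3}; the paper's localization argument is more algebraic and arguably more conceptual, but pays for it in length.

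One small point worth tightening: to invoke Lemma~\ref{cm} you need unmixedness in the sense that all \emph{associated} primes of $I(H)^{(t)}$ have the same height, not just the minimal ones. Your sentence only argues that the minimal primes are equiheight (via $G'$ well-covered). The missing observation is that $I(H)^{(t)}=\bigcap_{D\in\Gamma(G')}I_D^t$ with each $I_D^t$ primary to $(x_d:d\in D)$, so $\Ass(R'/I(H)^{(t)})\subseteq\{(x_d:d\in D):D\in\Gamma(G')\}$ and there are no embedded primes. Alternatively, you can skip unmixedness entirely: your identity already shows $\depth(R'/\sqrt{I(H)^{(t)}:x^{\ab'}})=\alpha(G')$ for every $\ab'$, so Lemma~\ref{depth} gives $\depth(R'/I(H)^{(t)})=\alpha(G')=\dim(R'/I(H)^{(t)})$ directly.
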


\begin{proof} Let $I=I(\D)$ and $J=I[v]$.  We assume that $N_{\D}^{+}{(v)}=\{{i_1},{i_2},\ldots,{i_p}\}$ and $N_{\D}^{-}{(v)}=\{{i_{p+1}},{i_{p+2}},\ldots,{i_s}\}$. Thus $N_{G}{(v)}=\{{i_1},{i_2},\ldots,{i_s}\}$. Since
$$I(\D)=(x_{v}x_{i_1}^{\omega({i_1})},\ldots,x_{v}x_{i_p}^{\omega({i_p})},x_v^{\omega(v)}x_{i_{p+1}},\ldots,x_v^{\omega(v)}x_{i_{s}})+I(\D\backslash v),$$
we obtain $J=(x_{i_1}^{\omega({i_1})},\ldots,{i_p}^{\omega({i_p})},x_{i_{p+1}},\ldots,x_{i_{s}})+I(\D\backslash v)$.

Let $\D'$ be the graph obtained from $\D \backslash v$ by removing all edges directed to some vertex in $\{i_1,\ldots,i_s\}$. In particular, $\{i_1,\ldots,i_s\}\in\Delta(G')$, where $G'$ is the underlying graph of $\D'$. Let $Q=(x_{i_1}^{\omega({i_1})},\ldots,x_{i_p}^{\omega({i_p})},x_{i_{p+1}},\ldots,x_{i_{s}})$. Then $J=Q+I(\D')$ since $x_mx_{i_k}^{\omega({i_k})}\in Q$ for every $k\in[s]$ and $m\in N_{\D\backslash v}^{-}({i_k})$. By Lemma \ref{decomposition},  $I(\D') = \bigcap\limits_{C \in \Gamma(\D')} I_C$,  so  $J = \bigcap\limits_{C \in \Gamma(\D')} (Q+I_C)$.

For simplicity, we set $G_v=G\backslash N_G[v]$ and  $\D_v=\D\backslash N_{\D}[v]$. Since $I^{(t)}$ is Cohen-Macaulay, by Lemmas \ref{local} and \ref{symbolic},  $J^{(t)} = (I[v])^{(t)} = (I^{(t)})[v]$ is also Cohen-Macaulay. In particular, $\sqrt{J^{(t)}}=\sqrt{J}=(x_{i_1},\ldots,x_{i_s})+I(G_v)$ is Cohen-Macaulay by Lemma \ref{radical}. Since $\{{i_1},\ldots,{i_s}\} \cap V(G_v)=\emptyset$, we conclude that $I(G_v)$ is Cohen-Macaulay. In particular, $G_v$ is well-covered. This also implies that $\operatorname{ht}(J) = \operatorname{ht}(\sqrt{J}) = s + \beta(G_v)$, where $\beta(G_v)$ is the covering number of $G_v$.

\medskip

For any $C\in \Gamma(G_v)$, define
$$A(C) = \{i\in\{{i_1},\ldots,{i_s}\} \mid N_{G'}(i) \not \subseteq C\} \cup C.$$

We will now prove the following four claims:

\medskip

{\em Claim $1$}: $A(C) \in \Gamma(\D')$ and $\operatorname{ht}(J)=\operatorname{ht}(Q+I_{A(C)})$ for any $C\in \Gamma(G_v)$.

Indeed, since $\{{i_1},\ldots,{i_s}\} \in\Delta(G')$, we can deduce that $A(C)$ is a vertex cover of $G'$. Furthermore,  by the definition of $A(C)$, $L_3(A(C)) = \emptyset$, so $A(C)$ is a minimal vertex cover of $G'$. In particular, $A(C)\in \Gamma(\D')$. On the other hand, $|C| = \beta(G_v)$ since $G_v$ is well-covered. Thus 
$$\operatorname{ht}(Q+I_{A(C)})=s+|C|=s+\beta(G_v)=\operatorname{ht}(J),$$
and the claim follows.

\medskip

{\em Claim $2$}: $\Gamma(G_v) = \{C'\cap V(\D_v)\mid C' \in \Gamma(\D') \text{ such that }  \operatorname{ht}(J)=\operatorname{ht}(Q+I_{C'})\}.$

By Claim $1$, it suffices to prove the inclusion relation $\supseteq$. To prove this, for any $C'\in\Gamma(\D')$ with $\operatorname{ht}(J)=\operatorname{ht}(Q+I_{C'})$, we let $C=C'\cap V(G_v)$. Since $G_v$ is an induced subgraph of $G'$ and  $C'$ is a vertex cover of $G'$, $C$ is a vertex cover of $G_v$.
On the other hand, since $\operatorname{ht}(J)=\operatorname{ht}(Q+I_{C'})$,  we have  $s+\beta(G_v)=s+|C|$, and therefore, $|C|=\beta(G_v)$. Therefore,  $C$ is a minimal vertex cover of $G_v$, as claimed.

\medskip

{\em Claim $3$}: For any  $C'\in\Gamma(\D')$ such that  $\operatorname{ht}(J)=\operatorname{ht}(Q+I_{C'})$, we have $A(C'\cap V(G_v)) = C'$.

First, we  show that $C'$ is a minimal vertex cover of $G'$. We write $C'$ as $$C'=(C'\cap V(\D_v))\cup(C'\cap\{i_1,\ldots,i_s\}).$$
By Claim $2$,  $C=C'\cap V(G_v)$ is a minimal vertex cover of $G_v$,  thus $L_3(C)=\emptyset$. Consequently, $L_3(C') \cap C = \emptyset$. On the other hand, there are no edges in $\D'$  directed to some vertex in $\{{i_1},\ldots,{i_s}\}$, so
$\{{i_1},\ldots,{i_s}\} \cap L_3(C') =\emptyset$. This  implies that $L_3(C')=\emptyset$, therefore, $C'$ is a minimal vertex cover of $G'$. Next, we  show that $C' = A(C)$. Note that, for any $i\in \{i_1,\ldots,i_s\}$, we have
$i\in C'$ whenever $N_{G'}(i) \not\subseteq C$. This implies that $A(C)\subseteq C'$. Since $C'$ is a minimal vertex cover of $G'$,  $C'$ must equal $A(C)$, as claimed.

\medskip

{\em Claim $4$}: $I_{A(C)} = (x_j\mid j\in A(C)\cap \{i_1,\ldots,i_s\})+ I_C$ for all $C\in \Gamma(G_v)$.

Indeed, since there are no edges in $\D'$ directed from any vertex in $\{i_1,\ldots,i_s\}$ to a vertex in $V(G_v)$  and the fact that $\{i_1,\ldots,i_s\}\in \Delta(G')$, we obtain
$$L_1(C) = L_1(A(C))\cap V(G_v) \text{ and }\{i_1,\ldots,i_s\}\cap A(C) \subseteq L_1(A(C)).$$
Hence, the claim that these formulas. 

\medskip

 Since $J = \bigcap\limits_{C' \in \Gamma(\D')} (Q+I_{C'})$, we have
$$J^{(t)} = \bigcap\limits_{C' \in \Gamma(\D') \colon \operatorname{ht}(Q + I_{C'})=\operatorname{ht}(J)} (Q+I_{C'})^t.$$
For  any $C\in \Gamma(G_v)$,   let $Q_C = Q + (x_i\mid i\in A(C) \cap \{i_1,\ldots,i_s\})$.
Together with the four claims above, we deduce
\begin{equation}\label{EQS}
J^{(t)} = \bigcap\limits_{C \in \Gamma(G_v)} (Q_C + I_{C})^t.
\end{equation}

Now, let $x^{\mathbf{a}}$ be a monomial such that $\operatorname{supp}(x^{\mathbf{a}})\subseteq V(\D_v)$. Then, for any $C\in\Gamma(G_v)$, we have $x^{\mathbf{a}} \in (Q_C+I_C)^t$ if and only if $x^{\mathbf{a}} \in I_C^t$. Together with Equation $(\ref{EQS})$, this fact  yields 
\begin{align*}
\sqrt{J^{(t)}:x^{\mathbf{a}}}&=\sqrt{\bigcap\limits_{C \in \Gamma(G_v)}(Q_C+I_{C})^t:x^{\mathbf{a}}}\\
&=(x_{i_1},\ldots,x_{i_s})+\sqrt{\bigcap\limits_{C \in \Gamma(G_v)}I_C^t:x^{\mathbf{a}}}\\
&=(x_{i_1},\ldots,x_{i_s})+\sqrt{I(\D_v)^{(t)}:x^{\mathbf{a}}},
\end{align*}

This implies that if $x^{\mathbf{a}}\notin I(\D_v)^{(t)}$, then $(x_{i_1},\ldots,x_{i_s})+\sqrt{I(\D_v)^{(t)}:x^{\mathbf{a}}}$ is Cohen-Macaulay according to Lemma \ref{cm}, and so is $\sqrt{I(\D_v)^{(t)}:x^{\mathbf{a}}}$. Using Lemma \ref{cm} again, we conclude that $I(\D_v)^{(t)}$ is Cohen-Macaulay, as required.
      \end{proof}
  
Now we are ready to prove the first main result of this section.

\begin{Theorem}\label{cmsymbolic} Let $\D$ be a weighted oriented graph with an underlying graph $G$. Then $I(\D)^{(t)}$ is Cohen-Macaulay for all $t\geqslant 1$ if and only if $G$ is a disjoint union of complete graphs.
\end{Theorem}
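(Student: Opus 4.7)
The plan is to prove the two directions separately. For the forward implication, I proceed by induction on $\alpha(G)$. Since $\sqrt{I(\D)^{(t)}} = I(G)$, Lemma \ref{radical} implies that $I(G)$ is Cohen-Macaulay, so $\Delta(G)$ is pure and, by Lemma \ref{cmcomplex}, connected. By Lemma \ref{complete-graph}, it is enough to show that $\Delta(G)$ is a matroid. The case $\alpha(G) = 1$ is trivial (then $G$ is already complete), and $\alpha(G) = 2$ is exactly Lemma \ref{a=2}. For $\alpha(G) \geq 3$, I apply Lemma \ref{ngxv}: for every vertex $v$, the ideal $I(\D \setminus N_\D[v])^{(t)}$ is Cohen-Macaulay for all $t \geq 1$, and the underlying graph $G \setminus N_G[v]$ has strictly smaller independence number by purity of $\Delta(G)$. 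The inductive hypothesis then forces $\Delta(G \setminus N_G[v]) = \lk_{\Delta(G)}(v)$ to be a matroid. Hence $\Delta(G)$ is connected and locally a matroid, and Lemma \ref{locally-matroid} closes the argument.

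For the reverse implication, write $G = K_{n_1} \sqcup \cdots \sqcup K_{n_r}$ with vertex partition $V = V_1 \sqcup \cdots \sqcup V_r$, and let $\D_i$ denote the induced weighted oriented graph on $V_i$. Every minimal prime of $I(\D)$ has height $n - r$, so $I(\D)^{(t)}$ is unmixed with $\dim R/I(\D)^{(t)} = r$. By Lemma \ref{cm}, it suffices to show that $\sqrt{I(\D)^{(t)} \colon x^{\ab}}$ is Cohen-Macaulay for every monomial $x^{\ab} \notin I(\D)^{(t)}$. By Lemma \ref{degree complex3}, this radical is the Stanley-Reisner ideal of the degree complex $\Delta_{\ab}(I(\D)^{(t)})$, whose facets are precisely the sets $\{v_1, \ldots, v_r\}$ with $v_i \in V_i$ satisfying
$$\sum_{i=1}^{r} c_i(v_i) \leq t - 1, \qquad c_i(v_i) := \sum_{j \in N_{\D_i}^{-}(v_i)} a_j + \sum_{j \in N_{\D_i}^{+}(v_i)} \left\lfloor \frac{a_j}{\omega(j)} \right\rfloor,$$
since in the complete graph $K_{n_i}$ every vertex of $V_i \setminus \{v_i\}$ is either an in-neighbor or an out-neighbor of $v_i$. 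Thus $\Delta_{\ab}(I(\D)^{(t)})$ is pure of dimension $r - 1$, matching $\dim R/I(\D)^{(t)}$, and the facet set is an order ideal in the product of chains $V_1 \times \cdots \times V_r$, where each $V_i$ is ordered by increasing $c_i$.

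The main combinatorial content left is to show that an order-ideal complex on a product of chains is Cohen-Macaulay. I expect to establish this by a lexicographic shelling on the facets ordered by the tuple $(c_1(v_1), \ldots, c_r(v_r))$: because the facet set is downward closed, when a facet $F_0$ is added its restriction $F_0 \cap \bigcup_{F' < F_0} F'$ is precisely the union of the $(r-2)$-faces $F_0 \setminus \{v_i^{(0)}\}$ for those components $i$ in which $v_i^{(0)}$ is not minimal under $c_i$, which is a pure $(r-2)$-subcomplex of $\partial F_0$. Alternatively, one can induct on $r$, using that each vertex link in such a complex is itself an order-ideal complex on a product of chains with one fewer factor. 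Once shellability is in hand, Lemma \ref{cm} delivers Cohen-Macaulayness of $I(\D)^{(t)}$ for every $t \geq 1$, completing the sufficiency.
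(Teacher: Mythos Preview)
Your forward implication is exactly the paper's argument: induction on $\alpha(G)$, with the base case $\alpha(G)=2$ handled by Lemma~\ref{a=2}, the inductive step driven by Lemma~\ref{ngxv}, and the passage from ``locally a matroid'' to ``matroid'' via Lemma~\ref{locally-matroid}. Nothing to add there.

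For the reverse implication your route is correct but genuinely different from the paper's. The paper invokes \cite[Corollary~4.8]{HHTT} to reduce to the case where $G$ is a single complete graph; then $\Gamma(G)=\{[n]\setminus\{i\}\}_i$, so the maximal ideal is not associated to $I(\D)^{(t)}$ and $\depth R/I(\D)^{(t)}\geqslant 1=\dim R/I(\D)^{(t)}$ is immediate. Your approach instead analyses $\Delta_{\ab}(I(\D)^{(t)})$ directly for the full product of complete graphs. Your identification of the facet set with a nonempty order ideal in the product of chains $V_1\times\cdots\times V_r$ (each $V_i$ totally ordered by $c_i$, ties broken arbitrarily) is correct, and the lex shelling does go through: for $F_0=\{v_{1,j_1},\ldots,v_{r,j_r}\}$, downward closure guarantees that each $F_0\setminus\{v_{i,j_i}\}$ with $j_i>1$ lies in the earlier facet obtained by decreasing the $i$-th coordinate, and conversely any earlier facet $F'<_{\mathrm{lex}}F_0$ differs from $F_0$ in some first coordinate $i$ with $j_i>1$, forcing $v_{i,j_i}\notin F'$. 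The paper's argument is shorter and leans on an external black box about symbolic powers of sums; yours is longer but self-contained and yields the extra information that every degree complex $\Delta_{\ab}(I(\D)^{(t)})$ is shellable, not merely Cohen--Macaulay.
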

\begin{proof} $(\Longrightarrow)$ Assume that $I(\D)^{(t)}$ is Cohen-Macaulay for all $t\ge 1$. We will prove that  $\Delta(G)$ is a matroid by induction on  $\alpha(G)$. If $\alpha(G)=1$, then $\dim(\Delta(G))=0$, so $\Delta(G)$ is  clearly a matroid.  If $\alpha(G)=2$, then $\Delta(G)$ is a matroid by Lemma \ref{a=2}.

Assume that $\alpha(G) \geqslant 3$,  $\dim(\Delta(G))=\alpha(G)-1 \geq 2$. Since $I(\D)^{(t)}$ is Cohen-Macaulay for all $t \geq 1$, its radical $I(G) = \sqrt{I(\D)^{(t)}}$ is also Cohen-Macaulay. Therefore, by Lemma \ref{cmcomplex}, $\Delta(G)$ is connected. Next, for any $i \in [n]$, let $\D' = \D\setminus N_{\D}[i]$ and $G' = G\setminus N_G[i]$. Since $I(G)$ is Cohen-Macaulay, $G$ is well-covered. By Lemma \ref{well-covered}, $\alpha(G')=\alpha(G)-1$. By Lemma \ref{ngxv}, $I(\D')^{(t)}$ is also Cohen-Macaulay for all $t\geqslant 1$. Note that  $G'$ is the underlying graph of $\D'$ and that $\alpha(G')=\alpha(G)-1$, by the induction hypothesis, $\Delta(G')$ is a matroid. Since $\Delta(G') = \lk_{\Delta(G)}(i)$, the simplicial complex $\Delta(G)$ is locally a matroid. Since $\Delta(G)$ is connected and $\dim(\Delta(G))\geqslant 2$, by Lemma \ref{locally-matroid}, we have $\Delta(G)$ is a matroid, as desired.

 According to Lemma \ref{complete-graph}, $G$ is a disjoint union of complete graphs.

$(\Longleftarrow)$ Assume that $G$ is a disjoint union of complete graphs, say $G_1,\ldots,G_s$. Let $\D_i$ be the induced subgraph of $\D$ on $V(G_i)$ for $i=1,\ldots,s$. Thus the underlying graph of $\D_i$ is just $G_i$, and
$$I(\D) = I(\D_1)+\cdots+I(\D_s).$$

By \cite[Corollary 4.8]{HHTT}, in order to prove that $I(\D)^{(t)}$ is Cohen-Macaulay for every $t\geqslant 1$, it suffices to show that $I(\D_i)^{(t)}$ is Cohen-Macaulay for every $t\geqslant 1$. Therefore, we can assume that $G$ is a complete graph with $n$ vertices. In this case,
$$\Gamma(G) = \{[n]\setminus \{i\}\mid i=1,\ldots,n\}.$$
This together  with Lemma \ref{SPowers} yields
$$(x_1,\ldots,x_n)\notin \Ass(R/I(\D)^{(t)}).$$
In particular, $\depth(R/I(\D)^{(t)})\geqslant 1$. On the other hand,
$$\depth(R/I(\D)^{(t)}) \leqslant\dim (R/I\left(\D)^{(t)}\right) = \dim \left(R/\sqrt{I(\D)^{(t)}}\right)= \dim \left(R/I(G)\right)= 1.$$
This implies that $\depth(R/I(\D)^{(t)})=\dim(R/I(\D)^{(t)})$. Therefore, $I(\D)^{(t)}$ is Cohen-Macaulay, and the proof is now  complete.
\end{proof}

The following example shows that there is a weighted oriented  graph $\D$ such that $I(\D)^{(t)}$ is Cohen-Macaulay for many $t$, but not for all $t$.

\begin{Example} \label{E4} Let $k$ be a  positive integer and let $\D$ be an oriented graph with a vertex set $V(\D) = \{1,2,3,4\}$ and an edge set $E(\D) = \{(1,2),(2,3),(3,4)\}$. The weight function is
$$\omega(1)=\omega(4) = 1 \text{\ and\ } \omega(2)=\omega(3)=k.$$
For this graph, $I(\D) = (x_1x_2^k, x_2x_3^k, x_3x_4)$. Then,
$I(\D)^{(t)}$ is Cohen-Macaulay for all $t \leqslant k$ but $I(\D)^{(t)}$ is not Cohen-Macaulay for all $t >k$.
\end{Example}
\begin{proof} Since $I=I(\D)$ has a minimal primary decomposition as
$$I = (x_1,x_3)\cap (x_2^k,x_3) \cap (x_2,x_4)\cap (x_1,x_3^k,x_4)\cap (x_2^k,x_3^k,x_4).$$
We obtain
$$I^{(t)} = (x_1,x_3)^t\cap (x_2^k,x_3)^t\cap (x_2,x_4)^t.$$

By Lemma \ref{cm},  $I^{(t)}$ is not Cohen-Macaulay if and only if there is a monomial $f = x_1^{a_1}x_2^{a_2}x_3^{a_3}x_4^{a_4}\notin I^{(t)}$ such that
$$\sqrt{I^{(t)}\colon f} = (x_1,x_3)\cap (x_2,x_4),$$
this means that $f\in (x_2^k,x_3)^t$ but $f\notin (x_1,x_3)^t$ and $f\notin (x_2,x_4)^t$. Equivalently,
$$
\begin{cases}
\lfloor\frac{a_2}{k}\rfloor+a_3\geqslant t, &\\ 	
a_1+a_3\leqslant t-1, &\\
a_2+a_4 \leqslant t-1.
\end{cases}
$$
We can verify that this system has a solution $(a_1,a_2,a_3,a_4)\in \NN^4$ if and only if $t > k$, as required.
\end{proof}

We will conclude this section with a corollary of Theorem \ref{cmsymbolic}.

\begin{Corollary}\label{cmordinary} Let $\D$ be a weighted oriented graph with an underlying graph $G$. Then $I(D)^t$ is Cohen-Macaulay for all $t\ge 1$ if and only if $G$ is a disjoint union of edges.
\end{Corollary}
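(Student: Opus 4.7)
The plan is to derive this corollary by combining Theorem \ref{cmsymbolic} with the equality criterion of Theorem \ref{equal}, bridged by the elementary observation that a Cohen-Macaulay ideal is unmixed. In particular, no fundamentally new arguments should be needed beyond those two results.

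For the forward direction, I would start from the fact that if $I(\D)^t$ is Cohen-Macaulay for every $t\ge 1$, then each $I(\D)^t$ is unmixed. Since any embedded prime of $I(\D)^t$ would strictly contain one of the (common-height) minimal primes, unmixedness rules out embedded primes, and the primary decomposition of $I(\D)^t$ reduces to its minimal-prime components. Hence $I(\D)^t = I(\D)^{(t)}$ for every $t\ge 1$, so $I(\D)^{(t)}$ is Cohen-Macaulay for every $t\ge 1$ and Theorem \ref{cmsymbolic} forces $G$ to be a disjoint union of complete graphs. Applying Theorem \ref{equal} for every $t\ge 2$ then gives that $G$ has no odd cycle of length $2s-1$ for any $s\ge 2$; in particular $G$ is triangle-free. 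Since $K_m$ with $m\ge 3$ contains a triangle, every component of $G$ is $K_1$ or $K_2$, i.e.\ $G$ is a disjoint union of edges (isolated vertices contribute only free variables and play no role).

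For the reverse direction, assume $G$ is a disjoint union of edges. Since $K_2$ is a complete graph, Theorem \ref{cmsymbolic} immediately yields that $I(\D)^{(t)}$ is Cohen-Macaulay for every $t\ge 1$. To upgrade this to the ordinary powers, I would verify the two conditions of Theorem \ref{equal}: condition (1) holds because every vertex of $G$ is either a source of its incident edge, and hence of weight $1$ by the standing convention (so not in $V^+(\D)$), or a sink of its incident edge, so every vertex of $V^+(\D)$ is indeed a sink; condition (2) is vacuous since $G$ contains no cycle whatsoever. Thus $I(\D)^t = I(\D)^{(t)}$ for every $t\ge 2$, while the $t=1$ case is trivial, and the Cohen-Macaulayness of $I(\D)^t$ follows for all $t\ge 1$.

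The only nontrivial ingredient is the bridge between ordinary and symbolic powers. Once one observes that Cohen-Macaulayness forces unmixedness, and hence $I^t = I^{(t)}$, both implications reduce to bookkeeping with Theorems \ref{cmsymbolic} and \ref{equal} and the elementary combinatorial fact that complete graphs on three or more vertices contain a triangle.
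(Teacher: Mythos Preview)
Your argument is correct. The forward direction matches the paper's proof almost exactly: both use Cohen--Macaulay $\Rightarrow$ unmixed $\Rightarrow$ $I(\D)^t=I(\D)^{(t)}$, then invoke Theorem~\ref{cmsymbolic} to get a disjoint union of complete graphs; the only difference is that the paper cites \cite[Theorem~3.3]{GMV} to conclude $G$ is bipartite, while you reach the same conclusion (no odd cycles, hence triangle-free) via Theorem~\ref{equal}. Since Theorem~\ref{equal} is proved later in the paper and does not rely on Corollary~\ref{cmordinary}, your forward reference is logically harmless, though the paper's choice avoids it.

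The reverse direction is where you genuinely diverge. The paper simply observes that a disjoint union of edges makes $I(\D)$ a complete intersection and then quotes \cite[Corollaries~3.7 and~4.8]{HHTT} to get Cohen--Macaulayness of all powers directly. You instead route through symbolic powers: Theorem~\ref{cmsymbolic} gives $I(\D)^{(t)}$ Cohen--Macaulay, and then you verify the hypotheses of Theorem~\ref{equal} to obtain $I(\D)^t=I(\D)^{(t)}$. Both work; the paper's route is shorter and self-contained at this point in the exposition, while yours has the virtue of staying entirely within the paper's own machinery and not needing the external complete-intersection result.
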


\begin{proof} $(\Longrightarrow)$ Assume that $I(G)^t$ is Cohen-Macaulay for all $t\geqslant 1$. Then, for every $t\geqslant 1$,  $I(\D)^t$ is unmixed. Therefore,  $I(\D)^t = I(\D)^{(t)}$. In particular, $I(\D)^{(t)}$ is Cohen-Macaulay.  According to  Theorem \ref{cmsymbolic}, $G$ is a disjoint union of complete graphs.

On the other hand, since $I(\D)^t = I(\D)^{(t)}$ for all $t\geqslant 1$, $G$ is bipartite by \cite[Theorem 3.3]{GMV}. Therefore,  every connected component of $G$ is an edge, consequently,  $G$ consists of  disjoint edges.

$(\Longleftarrow)$ Now, assume that $G$ is a disjoint union of edges, i.e., that  $I(\D)$ is a complete intersection. In this case,  using  \cite[Corollaries 3.7 and 4.8]{HHTT}, we conclude that $I(\D)^t$ is Cohen-Macaulay for all $t\geqslant 1$.
\end{proof}
        
\section{Equality of ordinary and symbolic powers of edge ideals} \label{sec:equality}

Let $\D$ be a weighted oriented graph. In this section, we will characterize the Cohen-Macaulayness of $I(\D)^t$ for a given integer $t\geqslant 2$. To do this, we will  need an operation on monomial ideals. Let $R = K[x_1, \ldots, x_n]$ be a polynomial ring of $n$ variables $x_1,\ldots,x_n$ over a field $K$.

 For any weight vector $\mathbf{w} = (w_1, \ldots, w_n) \in \mathbb{Z}_{>0}^n$, we define the weighted action of $\mathbf{w}$ on  $x^{\mathbf{a}}$ as $\mathbf{w}(x^{\mathbf{a}}) = x_1^{w_1 a_1} \cdots x_n^{w_n a_n}$.	For a monomial ideal $I\subseteq R$, define ${\bf w}(I)$ to be the monomial ideal
$${\bf w}(I) = (w(m) \mid m \text{ is a monomial in } I)$$
and
$$\mathcal{G}({\bf w}(I))=\{{\bf w}(m)\mid m\in\mathcal{G}(I)\}.$$

The following lemma follows immediately from this definition.
\begin{Lemma}\label{w1} Let $I,J\subseteq R$ be monomial ideals. Then,
\begin{itemize}
\item[(1)] $I=J$ if and only if ${\bf w}(I) = {\bf w}(J)$,
\item[(2)] ${\bf w}(I^t) = ({\bf w}(I))^t$ for all $t\geqslant 1$,  and
\item[(3)] ${\bf w}(I^{(t)}) = ({\bf w}(I))^{(t)}$ for all $t\geqslant 1$.
\end{itemize}
\end{Lemma}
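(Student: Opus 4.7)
My plan is to derive all three parts from the single observation that the weighted action is an order-preserving and order-reflecting bijection on the set of monomials of $R$: $x^{\mathbf{a}} \mid x^{\mathbf{b}}$ if and only if $\mathbf{w}(x^{\mathbf{a}}) \mid \mathbf{w}(x^{\mathbf{b}})$, which is immediate because each $w_i$ is strictly positive. I will also use that $\mathbf{w}$ is multiplicative, $\mathbf{w}(m_1 m_2) = \mathbf{w}(m_1)\mathbf{w}(m_2)$, and compatible with least common multiples, $\mathbf{w}(\lcm(m_1,m_2)) = \lcm(\mathbf{w}(m_1), \mathbf{w}(m_2))$.

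Granting this, $\mathbf{w}$ sends the unique minimal monomial generating set of a monomial ideal bijectively to that of its image, i.e.\ $\mathcal{G}(\mathbf{w}(I)) = \{\mathbf{w}(m) \mid m \in \mathcal{G}(I)\}$, which is the formula already recorded in the text. Part~(1) falls out immediately, since a monomial ideal is determined by its minimal generating set and $\mathbf{w}$ is injective on monomials: $I=J$ iff $\mathcal{G}(I)=\mathcal{G}(J)$ iff $\mathcal{G}(\mathbf{w}(I))=\mathcal{G}(\mathbf{w}(J))$ iff $\mathbf{w}(I)=\mathbf{w}(J)$. Part~(2) then follows by multiplicativity: a minimal generator of $I^t$ has the form $m_1\cdots m_t$ with each $m_i\in \mathcal{G}(I)$, and applying $\mathbf{w}$ to such products produces exactly the minimal generators of $(\mathbf{w}(I))^t$.

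The only part requiring a bit of work is (3). My strategy is to first prove the auxiliary identity $\mathbf{w}(I_1 \cap I_2) = \mathbf{w}(I_1) \cap \mathbf{w}(I_2)$ for any two monomial ideals $I_1,I_2\subseteq R$; this reduces to a coordinate-wise check, since the minimal generators of $I_1 \cap I_2$ are obtained as LCMs of pairs of generators of $I_1$ and $I_2$, and LCMs commute with $\mathbf{w}$. Next I will take the irredundant monomial primary decomposition $I = \bigcap_{i=1}^s Q_i$ recalled in Section~\ref{sec:prelim}, with $Q_1,\ldots,Q_r$ the primary components at the minimal primes $\mathfrak{p}_1,\ldots,\mathfrak{p}_r$. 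Since each $Q_i$ has the form $(x_j^{a_{ij}}\mid j\in S_i)$ with $\mathfrak{p}_i = (x_j \mid j\in S_i)$, its image $\mathbf{w}(Q_i) = (x_j^{w_j a_{ij}} \mid j\in S_i)$ is still $\mathfrak{p}_i$-primary. Hence $\mathbf{w}(I)=\bigcap_{i=1}^s \mathbf{w}(Q_i)$ is a monomial primary decomposition of $\mathbf{w}(I)$ with the same minimal primes, and combining this with part~(2) and the monomial symbolic-power formula yields
\[
\mathbf{w}(I^{(t)}) \;=\; \bigcap_{i=1}^r \mathbf{w}(Q_i^t) \;=\; \bigcap_{i=1}^r \mathbf{w}(Q_i)^t \;=\; (\mathbf{w}(I))^{(t)}.
\]

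The only genuinely needed verifications are that $\mathbf{w}$ commutes with intersection of monomial ideals and carries a primary monomial ideal to a primary ideal with the same radical; both reduce to elementary exponent-vector bookkeeping, but they are the crux of (3) and are worth stating explicitly.
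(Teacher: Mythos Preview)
Your argument is correct. The paper does not actually prove this lemma; it simply states that it ``follows immediately from this definition,'' so your proposal supplies exactly the details the authors leave to the reader, via the same underlying idea (that $\mathbf{w}$ is a divisibility-preserving, multiplicative bijection on monomials, and hence respects generating sets, products, intersections, and primary components).
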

		
\begin{Lemma}\label{betti} Let $I\subseteq R$ be a monomial ideal. Then, $\beta_{i,\mathbf{a}}(R/I)=\beta_{i,{\bf w}(\mathbf{a})}(R/{\bf w}(I))$ for all $i\geqslant 0$ and $\mathbf{a}\in \mathbb{N}^n$, where ${\bf w}(\mathbf{a})=(w_1a_1,\ldots,w_na_n)$.
\end{Lemma}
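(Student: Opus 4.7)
The plan is to prove $K^{\mathbf{a}}(I) = K^{\mathbf{w}(\mathbf{a})}(\mathbf{w}(I))$ as simplicial complexes on $[n]$, and then invoke Hochster's formula (Lemma~\ref{betti1}), which expresses both Betti numbers as $\dim_K \tilde{H}_{i-2}$ of these respective Koszul complexes. Since the two complexes would then literally coincide, their reduced homologies agree, giving the desired equality of Betti numbers.

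The heart of the argument is a single divisibility equivalence: for every monomial generator $m = \prod_i x_i^{b_i} \in \mathcal{G}(I)$ and every squarefree vector $\boldsymbol{\tau} \in \{0,1\}^n$, I claim
\[
m \mid x^{\mathbf{a} - \boldsymbol{\tau}} \quad \Longleftrightarrow \quad \mathbf{w}(m) \mid x^{\mathbf{w}(\mathbf{a}) - \boldsymbol{\tau}}.
\]
Coordinate-wise this says $b_i \le a_i - \tau_i \iff w_i b_i \le w_i a_i - \tau_i$. For $\tau_i = 0$ both inequalities are $b_i \le a_i$, which are equivalent since $w_i \ge 1$. For $\tau_i = 1$ the forward direction is immediate ($b_i \le a_i - 1$ forces $w_i b_i \le w_i a_i - w_i \le w_i a_i - 1$), while the reverse uses integrality: $w_i b_i \le w_i a_i - 1$ gives $b_i < a_i$, and since $b_i, a_i \in \mathbb{N}$ we conclude $b_i \le a_i - 1$. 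Because every monomial in $I$ is a multiple of some generator, this equivalence promotes to $x^{\mathbf{a}-\boldsymbol{\tau}} \in I \iff x^{\mathbf{w}(\mathbf{a}) - \boldsymbol{\tau}} \in \mathbf{w}(I)$, which is exactly the membership condition defining each Koszul complex.

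Consequently $K^{\mathbf{a}}(I) = K^{\mathbf{w}(\mathbf{a})}(\mathbf{w}(I))$, and applying Lemma~\ref{betti1} to each side yields
\[
\beta_{i,\mathbf{a}}(R/I) = \dim_K \tilde{H}_{i-2}\bigl(K^{\mathbf{a}}(I); K\bigr) = \dim_K \tilde{H}_{i-2}\bigl(K^{\mathbf{w}(\mathbf{a})}(\mathbf{w}(I)); K\bigr) = \beta_{i,\mathbf{w}(\mathbf{a})}(R/\mathbf{w}(I)).
\]
The only subtle point is the reverse implication in the $\tau_i = 1$ case, which would fail without the hypothesis that $\boldsymbol{\tau}$ is squarefree and that we work in integers; this is precisely where the squarefreeness built into the Koszul complex plays its role. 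Everything else is routine bookkeeping, so I expect no substantive obstacle.
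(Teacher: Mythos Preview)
Your proof is correct and follows essentially the same route as the paper: both arguments reduce the Betti number equality to showing $K^{\mathbf{a}}(I) = K^{\mathbf{w}(\mathbf{a})}(\mathbf{w}(I))$ via the same coordinate-wise divisibility analysis (splitting into $\tau_i=0$ and $\tau_i=1$, using integrality for the reverse implication), and then invoke Lemma~\ref{betti1}. Your presentation is slightly cleaner in that you state the generator-level divisibility equivalence once and symmetrically, whereas the paper handles the two inclusions separately, but the mathematical content is identical.
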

	
\begin{proof} By Lemma \ref{betti1}, it suffices to prove that $K^{\mathbf{a}}(I) = K^{\mathbf{w}(\mathbf{a})}(\mathbf{w}(I))$, where  $K^{\mathbf{a}}(I)$  is the (upper) Koszul simplicial complex of $I$ in degree $\mathbf{a}$.  First, we prove the inclusion relation $K^{\mathbf{a}}(I) \subseteq K^{\mathbf{w}(\mathbf{a})}(\mathbf{w}(I))$. For any  squarefree vector $\boldsymbol{\tau} \in K^{\mathbf{a}}(I)$,   $x^{\mathbf{a} - \boldsymbol{\tau}} \in I$. There exists  some $h \in \mathcal{G}(I)$ such that  $h\mid x^{\mathbf{a} - \boldsymbol{\tau}}$. It is obvious  that $\mathbf{w}(h)\mid \mathbf{w}(x^{\mathbf{a} - \boldsymbol{\tau}})$. Note that $\mathbf{w}(x^{\mathbf{a} - \boldsymbol{\tau}}) = x^{\mathbf{w}(\mathbf{a}) - \mathbf{w}(\boldsymbol{\tau})}$ and $x^{\mathbf{w}(\mathbf{a}) - \mathbf{w}(\boldsymbol{\tau})}\mid x^{\mathbf{w}(\mathbf{a}) - \boldsymbol{\tau}}$, so $\mathbf{w}(h)\mid x^{\mathbf{w}(\mathbf{a}) - \boldsymbol{\tau}}$. This implies that $\boldsymbol{\tau} \in K^{\mathbf{w}(\mathbf{a})}(\mathbf{w}(I))$, and the inclusion follows.

For the converse inclusion, suppose that  $\boldsymbol{\tau}\in K^{{\bf w}(\mathbf{a})}({\bf w}(I))$, then there exists an $x^\mathbf{b}\in \mathcal{G}(I)$ such that ${\bf w}(x^\mathbf{b})\mid x^{{\bf w}(\mathbf{a})-\boldsymbol{\tau}}$. Since ${\bf w}(x^\mathbf{b})=x^{{\bf w}(\mathbf{b})}$, we  can conclude  that $w_ib_i\le w_ia_i-\tau_i$ for all $i\in[n]$. Thus, for each $i\in [n]$, we have the following system of inequalities:
			\[
			\begin{cases}
				b_i\leqslant a_i-1, &\text{if $\tau_i=1$,}\\ 	
				b_i\leqslant a_i, &\text{if $\tau_i=0$.}
			\end{cases}
			\]
Consequently, $x^{\mathbf{b}}\mid  x^{\mathbf{a}-\boldsymbol{\tau}}$, and thus  $\boldsymbol{\tau}\in K^\mathbf{a}(I)$, as required.
\end{proof}
	
\begin{Lemma}\label{cm2} Let $I$ be a monomial ideal. Then, $\pd(R/I) = \pd(R/\wb(I))$. In particular, $I$ is Cohen-Macaulay if and only if ${\bf w}(I)$ is also Cohen-Macaulay.
\end{Lemma}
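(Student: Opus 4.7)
The plan is to deduce the projective dimension statement directly from Lemma \ref{betti}, and then use Auslander-Buchsbaum together with the observation that taking radicals commutes with the weighting operation to upgrade this to the Cohen-Macaulay statement.

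First, I would show $\pd(R/I)=\pd(R/\wb(I))$. Recall that the projective dimension is the largest $i$ for which some multigraded Betti number $\beta_{i,\mathbf{b}}$ is nonzero. By Lemma \ref{betti}, $\beta_{i,\mathbf{a}}(R/I)=\beta_{i,\wb(\mathbf{a})}(R/\wb(I))$ for every $\mathbf{a}\in\NN^n$, so one inclusion $\pd(R/I)\le\pd(R/\wb(I))$ is immediate. For the reverse inequality, I would combine Lemma \ref{betaneq0} with the fact that $\mathcal{G}(\wb(I))=\{\wb(m)\mid m\in\mathcal{G}(I)\}$: if $\beta_{i,\mathbf{b}}(R/\wb(I))\ne 0$, then $x^{\mathbf{b}}$ is an lcm of generators $\wb(m_1),\dots,\wb(m_s)$, and since $\operatorname{lcm}(\wb(m_1),\dots,\wb(m_s))=\wb(\operatorname{lcm}(m_1,\dots,m_s))$ (because the $\max$ of $w_i\cdot(\text{exponents})$ equals $w_i$ times the max), one has $\mathbf{b}=\wb(\mathbf{a})$ for some $\mathbf{a}\in\NN^n$. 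Thus every nonzero multigraded Betti number of $R/\wb(I)$ lives in a weighted degree, and Lemma \ref{betti} gives $\pd(R/\wb(I))\le\pd(R/I)$.

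Next, for the Cohen-Macaulay part, I would combine the projective dimension equality with the Auslander-Buchsbaum formula $\depth(R/I)=n-\pd(R/I)$ (and similarly for $\wb(I)$) to obtain $\depth(R/I)=\depth(R/\wb(I))$. To compare dimensions, I would use $\dim(R/I)=\dim(R/\sqrt{I})$. For a monomial $m=x_1^{a_1}\cdots x_n^{a_n}$, the squarefree radical depends only on which $a_i$ are positive, and since each $w_i>0$, the monomials $m$ and $\wb(m)$ involve exactly the same set of variables. Hence $\sqrt{\wb(I)}=\sqrt{I}$, which forces $\dim(R/I)=\dim(R/\wb(I))$. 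Combining these two equalities gives $\depth(R/I)-\dim(R/I)=\depth(R/\wb(I))-\dim(R/\wb(I))$, so $R/I$ is Cohen-Macaulay if and only if $R/\wb(I)$ is, as required.

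There is no serious obstacle here once Lemma \ref{betti} is in hand; the only point that requires a moment of care is the reverse inequality for projective dimension, where I must verify via Lemma \ref{betaneq0} and the compatibility of $\wb$ with $\operatorname{lcm}$ that all potentially nonzero multigraded Betti numbers of $R/\wb(I)$ are concentrated in degrees of the form $\wb(\mathbf{a})$.
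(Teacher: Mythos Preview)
Your proposal is correct and follows essentially the same route as the paper: both use Lemma~\ref{betti} for one inequality and Lemma~\ref{betaneq0} together with the compatibility of $\wb$ with $\operatorname{lcm}$ for the other, and then invoke $\dim(R/I)=\dim(R/\wb(I))$ to pass from the projective-dimension equality to the Cohen--Macaulay equivalence. Your write-up is in fact slightly more detailed than the paper's, since you justify the dimension equality via $\sqrt{\wb(I)}=\sqrt{I}$, whereas the paper simply asserts it.
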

\begin{proof} Let $p = \pd(R/\wb(I))$. Then, $\beta_{p,\bb}(R/\wb(I)) \ne 0$ for some $\bb\in\NN^n$. By Lemma \ref{betaneq0},  we have $x^{\bb} = \lcm(\wb(m_1),\ldots,\wb(m_s))$ for some $m_1,\ldots,m_s\in \mathcal{G}(I)$. Let $x^{\ab}=\lcm(m_1,\ldots,m_s)$, then $\bb = \wb(\ab)$. By Lemma \ref{betti},  $\beta_{p,\bb}(R/\wb(I)) = \beta_{p,\ab}(R/I)$. Therefore, $\beta_{p,\ab}(I)\ne 0$ and  $\pd(R/I) \geqslant \pd(R/\wb(I))$. We can similarly prove the reverse inequality and conclude that  $\pd(R/I) = \pd(R/\wb(I))$.

Finally, note that $\dim(R/I) = \dim(R/\wb(I))$. Together with the equality $$\pd(R/I) = \pd(R/\wb(I)),$$
 we see that $I$ is Cohen-Macaulay if and only if so is $\wb(I)$, as required.
\end{proof}

We will  now study the Cohen-Macaulaynes of $I(\D)^t$ for some $t\geqslant 2$. To do so, we first characterize the equality $I(\D)^t = I(\D)^{(t)}$. For the edge ideal of a simple graph, this equality is given by \cite{RTY}.

\begin{Lemma}{\em (\cite[Lemma 3.10]{RTY})}\label{G}  Let $t\ge 2$ be an integer and let $I(G)$ be the edge ideal of a graph $G$. Then, the following conditions are equivalent:
\begin{itemize}
\item[(1)] $G$ contains no odd cycles of length $2s - 1$, where $2 \leqslant s \leqslant t$.
\item[(2)] $I(G)^{(t)} = I(G)^t$.
\end{itemize}
\end{Lemma}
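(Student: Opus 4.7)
I would prove the two implications separately. For $(2) \Rightarrow (1)$, I argue by contrapositive, constructing an explicit element of $I(G)^{(t)} \setminus I(G)^t$. Suppose $G$ contains an odd cycle $v_1 v_2 \cdots v_{2s-1}$ of length $2s-1$ with $2 \le s \le t$, and fix any edge $\{a,b\} \in E(G)$. Consider
\[
f = x_{v_1} x_{v_2} \cdots x_{v_{2s-1}} \cdot (x_a x_b)^{t-s}.
\]
The total degree of $f$ equals $(2s-1) + 2(t-s) = 2t - 1 < 2t$, so $f$ cannot be written as a product of $t$ edge generators of $I(G)$ and hence $f \notin I(G)^t$. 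For every minimal vertex cover $C$ of $G$, the intersection $C \cap \{v_1, \ldots, v_{2s-1}\}$ is a vertex cover of the odd cycle and so has size at least $s$ (the covering number of $C_{2s-1}$); moreover, at least one of $a, b$ lies in $C$, contributing at least $t-s$ to the $P_C$-adic order through the factor $(x_a x_b)^{t-s}$. Summing, the $P_C$-adic order of $f$ is $\ge t$ for every minimal prime $P_C$ of $I(G)$, so $f \in I(G)^{(t)}$.

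For $(1) \Rightarrow (2)$, the containment $I(G)^t \subseteq I(G)^{(t)}$ is automatic, so only the reverse inclusion requires proof. Given $f = x^{\mathbf{a}} \in I(G)^{(t)}$, the condition $f \in I(G)^t$ is equivalent to the existence of nonnegative integers $(w_e)_{e \in E(G)}$ with $\sum_e w_e = t$ and $\sum_{e \ni i} w_e \le a_i$ for every vertex $i$. I would exploit LP duality: the fractional matching optimum equals $\min \bigl\{\sum_i a_i y_i : y \ge 0,\ y_i + y_j \ge 1 \text{ for every edge } ij\bigr\}$. By Nemhauser--Trotter, the extreme points of this fractional vertex-cover polytope are half-integral, and at any such extreme point the half-valued coordinates induce a vertex-disjoint union of odd cycles of $G$. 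Hypothesis $(1)$ forces every such cycle to have length at least $2t+1$. A direct estimate at each extreme point, combining this length bound with the integer-cover inequalities $\sum_{i \in C} a_i \ge t$ (valid because $f \in I(G)^{(t)}$), yields $\sum_i a_i y_i \ge t$ on every vertex, hence over the whole polytope. Therefore the fractional matching number is at least $t$, and the absence of short odd cycles allows an augmenting-path style rounding to produce an integer matching of weight $t$, exhibiting the decomposition $f \in I(G)^t$.

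The main obstacle is this final rounding step. For general graphs, the integer matching number of a weighted graph can be strictly smaller than the fractional one, and the gap is controlled by short odd cycles via the half-integrality of the fractional matching polytope (Uhry). The role of hypothesis $(1)$ is precisely to rule out the odd cycles short enough to obstruct integer rounding up to weight $t$: any half-integral optimal matching is supported on a union of integer-weighted edges and half-weighted odd cycles, and when every such cycle has length $\ge 2t+1$ one can pair up the half-weights along the cycle into integer edges without dropping below $t$ in total weight. An alternative, more elementary route would be induction on $t$, whose inductive step extracts an edge $\{a,b\}$ from $\mathrm{supp}(f)$ such that $f/(x_a x_b)$ still lies in $I(G)^{(t-1)}$; arguing that such a choice exists under hypothesis $(1)$ is the combinatorial heart of the matter in either formulation.
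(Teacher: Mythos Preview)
The paper does not prove this lemma; it is simply quoted from \cite[Lemma~3.10]{RTY} and used as a black box. So there is no in-paper argument to compare against, and your attempt must stand on its own.

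Your direction $(2)\Rightarrow(1)$ is correct. The degree count $\deg f = 2t-1$ forces $f\notin I(G)^t$, and for any minimal vertex cover $C$ the exponent sum factors cleanly as
\[
\sum_{i\in C} a_i \;=\; \bigl|C\cap\{v_1,\ldots,v_{2s-1}\}\bigr| \;+\; (t-s)\,\bigl|C\cap\{a,b\}\bigr| \;\ge\; s+(t-s)=t,
\]
so $f\in I(G)^{(t)}$; there is no double-counting even when $\{a,b\}$ meets the cycle.

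Your direction $(1)\Rightarrow(2)$, however, has a genuine gap. The structural assertion you rely on---that at a half-integral extreme point of the fractional vertex-cover polytope the $\tfrac12$-valued coordinates induce a vertex-disjoint union of odd cycles---is false. That description belongs to extreme points of the fractional \emph{matching} polytope, not vertex covers. At a vertex-cover extreme point one only knows that each connected component of the $\tfrac12$-support is non-bipartite; it need not be a cycle. For a concrete witness, take $G$ to be the Petersen graph: it is triangle-free (so hypothesis~(1) holds for $t=2$), yet the all-$\tfrac12$ vector is an extreme fractional vertex cover and the $\tfrac12$-support is the entire $3$-regular graph, not a union of cycles. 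With this structural claim gone, your ``direct estimate'' showing $\sum_i a_i y_i \ge t$ at every extreme point has no foundation, and you do not supply an alternative. Even if one grants that the fractional matching optimum is at least $t$, the rounding to an integer $\mathbf a$-matching of weight $t$---which you yourself identify as the main obstacle---is only asserted, not argued; ``augmenting-path style rounding'' is a slogan, not a proof, and the capacitated setting (vertex bounds $a_i$ rather than $1$) further blocks any direct appeal to standard half-integrality of matchings. The inductive alternative you mention at the end is closer in spirit to how such results are usually established, but the existence of an edge $\{a,b\}$ with $f/(x_ax_b)\in I(G)^{(t-1)}$ is precisely the combinatorial heart of the matter, and it remains unproved.
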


We extend this result to weighted oriented graphs as follows:

\begin{Theorem}\label{equal} Let $t\ge 2$ be an integer and let $\D$ be a weighted oriented graph with an underlying graph $G$. Then  the following conditions are equivalent:
			\begin{itemize}
				\item[(1)] $I(\D)^t=I(\D)^{(t)}$.
				\item[(2)] Every vertex in $V^{+}(\D)$ is a sink, and $G$ contains no odd cycles of length $2s - 1$, where $2 \leqslant s \leqslant t$.
			\end{itemize}
\end{Theorem}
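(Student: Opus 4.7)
The plan is to use the change-of-weights operation $\mathbf{w}$ (Lemma \ref{w1}) to reduce matters to the simple-graph case (Lemma \ref{G}), with one technical monomial construction handling the sink condition.

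For $(2) \Rightarrow (1)$, I take $\mathbf{w} = (\omega(1), \ldots, \omega(n))$. For every edge $(i,j) \in E(\D)$, the source $i$ has the outgoing edge $(i,j)$ and so is not a sink; hence $\omega(i) = 1$, since otherwise $i \in V^+(\D)$ would be forced to be a sink. It follows that $\mathbf{w}(x_ix_j) = x_ix_j^{\omega(j)}$ is precisely the generator of $I(\D)$ for $(i,j)$, so $\mathbf{w}(I(G)) = I(\D)$. Combining parts (1)--(3) of Lemma \ref{w1} yields $I(\D)^t = I(\D)^{(t)} \iff I(G)^t = I(G)^{(t)}$, and by Lemma \ref{G} this is equivalent to the odd-cycle condition in (2).

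For $(1) \Rightarrow (2)$, the same equivalence will finish the odd-cycle part once we show every vertex of $V^+(\D)$ is a sink; I argue the contrapositive. Suppose $v \in V^+(\D)$ with $k := \omega(v) \geq 2$ is not a sink. Since the weight-$1$ convention for sources forces $v$ to be a non-source as well, I pick an in-neighbor $w$ and an out-neighbor $u$ of $v$, and propose as witness
\[ f = x_v^{k+t-2}\, x_w\, x_u^{(t-1)\omega(u)} \]
with exponent $0$ elsewhere, claiming $f \in I(\D)^{(t)} \setminus I(\D)^t$.

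For $f \in I(\D)^{(t)}$, by Lemma \ref{SPowers} I check the criterion $\sum_{i \in L_1(C)} a_i + \sum_{j \in C \setminus L_1(C)} \lfloor a_j/\omega(j)\rfloor \geq t$ for every $C \in \Gamma(G)$: if $v \in L_1(C)$ then $a_v = k+t-2 \geq t$ alone suffices; if $v \in C$ but $v \notin L_1(C)$ then $u \in C$ and the contributions of $v$ and $u$ total at least $1 + (t-1) = t$; if $v \notin C$ then $w, u \in N_G(v) \subseteq C$, so $w \in L_1(C)$ gives $a_w = 1$ and $u$'s contribution is $\geq t-1$. For $f \notin I(\D)^t$, suppose some product of $t$ generators divides $f$; since $f$ has exponent $0$ outside $\{v, w, u\}$, each factor must come from an edge of $\D$ lying inside $\{v,w,u\}$, namely one of $(w,v), (v,u)$ and at most one of $(w,u)$ or $(u,w)$. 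Denoting their multiplicities by $a, b, c, d$ with $a+b+c+d=t$, the divisibility constraints $ak + b \leq k+t-2$, $a + c + d\omega(w) \leq 1$ and $(b+c)\omega(u) + d \leq (t-1)\omega(u)$ force $a + c + d \leq 1$, hence $b \geq t-1$; a short case check on whether $b = t$ or one of $a, c, d$ equals $1$ derives a contradiction in each subcase using $k \geq 2$ and $\omega(u) \geq 1$. The main obstacle is precisely this bookkeeping: the exponent $k + t - 2$ on $x_v$ is calibrated so that the natural product $(x_wx_v^k)(x_vx_u^{\omega(u)})^{t-1}$ (which has $v$-exponent $k+t-1$) just barely fails to divide $f$, while $f$ remains large enough at $x_v$ to lie in every symbolic component.
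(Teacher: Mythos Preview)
Your argument is correct and follows the same overall strategy as the paper: reduce to the simple-graph case via the weight operation $\mathbf{w}$ (Lemma~\ref{w1}) and Lemma~\ref{G}, and handle the sink condition by exhibiting a witness monomial in $I(\D)^{(t)}\setminus I(\D)^t$ when some $v\in V^+(\D)$ is not a sink. The only substantive difference is the choice of witness: the paper uses $f=(x_w x_v^{\omega(v)})^{t-1}x_u^{\omega(u)}$ (in your notation), whereas you use $f=x_v^{\omega(v)+t-2}x_w x_u^{(t-1)\omega(u)}$; both choices work, and your uniform bookkeeping with multiplicities $a,b,c,d$ replaces the paper's three-case split on the possible edge between $w$ and $u$.
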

\begin{proof} (2) $\implies$ (1):  If every vertex in $V^{+}(D)$ is a sink, then $I(D) = \mathbf{w}(I(G))$, where $\mathbf{w} = (\omega(1), \omega(2), \ldots, \omega(n))$. Since $G$ contains no odd cycles of length $2s-1$, where  $2 \leqslant s \leqslant t$, then, by Lemma \ref{G}, $I(G)^{(t)} = I(G)^t$. Together with Lemma \ref{w1}, this yields $I(\D)^t = I(\D)^{(t)}$.
	     	
(1) $\implies$ (2):  First, we  prove that every vertex in $V^{+}(\D)$ is a sink. Suppose by contradiction that $v\in V^{+}(\D)$ is not a sink. Then there exist $k,u\in V(\D)$ such that  $(u,v),(v,k)\in E(\D)$. Set $f=(x_ux_v^{\omega(v)})^{t-1}x_k^{\omega(k)}$.  Due to Lemma \ref{SPowers}, we  first prove that $f\in I(\D)^{(t)}$, or equivalently, $f\in I_C^t$ for every  $C\in \Gamma(G)$. For any $C\in \Gamma(G)$, if $k\in C$, then $x_k^{\omega(k)}\in I_C$. Since  $(x_ux_v^{\omega(v)})^{t-1}\in I(\D)^{t-1}\subseteq I_C^{t-1}$, it follows that $f\in I_C^t$. If $k\notin C$, then $v\in L_1(C)$ and $x_v^{t-1}\in I_C^{t-1}$. Note that $x_vx_k^{\omega(k)}\in I(D)$, it follows that $f\in I_C^t$. Therefore, $f\in I(D)^{(t)}$.
	     	
Next, we prove that  $f\notin I(D)^t$. Suppose by contradiction that $f\in I(D)^t$,  we will consider the following three cases:

{\it Case $1$}: $(k,u)\in E(\D)$. We express $f$ as  $f=h(x_ux_v^{\omega(v)})^{t_1}(x_vx_k^{\omega(k)})^{t_2}(x_kx_u^{\omega(u)})^{t_3}$, where $h$ is a monomial, and $t_1+t_2+t_3=t$ with each $t_i\ge 0$. By comparing the degrees of each variable in the expression of  $f$, we obtain
	     	\[
	     	\begin{cases}
	     		 t-1\ge t_1+t_3\omega(u),&\\
(t-1)\omega(v)\ge t_1\omega(v)+t_2, &\\ 	
\omega(k)\ge t_2\omega(k)+t_3.
	     		\end{cases}
	     	\]
From the expression   $\omega(k)\ge t_2\omega(k)+t_3$, we can deduce that  $t_2$ is either $0$ or $1$. If $t_2=0$, then $t-1\geqslant t_1+t_3\omega(u)\ge t_1+t_3=t$, a contradiction. Thus $t_2=1$, which forces  $t_3=0$ and $t_1=t-1$. Then $(t-1)\omega(v)\ge t_1\omega(v)+t_2=(t-1)\omega(v)+1$, a contradiction.

{\it Case $2$}: $(u,k)\in E(\D)$. We express $f$ as  $f=h(x_ux_v^{\omega(v)})^{t_1}(x_vx_k^{\omega(k)})^{t_2}(x_ux_k^{\omega(k)})^{t_3}$, where $h$ is a monomial, and $t_1+t_2+t_3=t$ with each $t_i\ge 0$. By comparing the degrees of each variable in the expression of  $f$, we obtain
	     	\[
	     	\begin{cases}
	     		t-1\geqslant t_1+t_3, &\\ 	
	     		(t-1)\omega(v)\geqslant t_1\omega(v)+t_2, &\\ 	
	     	    \omega(k)\ge t_2\omega(k)+t_3\omega(k).
	     	\end{cases}
	     	\]
From the expression  $\omega(k)\geqslant t_2\omega(k)+t_3\omega(k)$, we can conclude that $t_2+t_3\leqslant 1$. If $t_2+t_3=0$, then $t_1=t$. Substituting this into the original expression yields  $t-1\ge t_1+t_3=t$, which is a contradiction. Thus $t_2+t_3=1$. If $t_2=1$ and $t_3=0$, then $t_1=t-1$, which gives us the inequality $(t-1)\omega(v)\ge t_1\omega(v)+t_2=(t-1)\omega(v)+1$, a contradiction.  Therefore, $t_2=0$ and $t_3=1$, which implies that $t_1=t-1$ and  $t-1\geqslant t_1+t_3=t$, a contradiction.

{\it Case $3$}: $\{u,k\}\notin E(G)$. We express $f$ as  $f=h(x_ux_v^{\omega(v)})^{t_1}(x_vx_k^{\omega(k)})^{t_2}$, where $h$ is a monomial, and $t_1+t_2=t$ with each $t_i\ge 0$. 
By comparing the degrees of each variable in the expression of  $f$, we obtain
	     \[
	     	\begin{cases}
	     		t-1\ge t_1, &\\
	     		(t-1)\omega(v)\geqslant t_1\omega(v)+t_2, &\\ 	
	     		\omega(k)\geqslant t_2\omega(k).
	     	\end{cases}
	     	\]
From the expression  $\omega(k)\geqslant t_2\omega(k)$, we can deduce that $t_2\leqslant 1$. If $t_2=0$, then $t-1\ge t_1=t$, a contradiction. Thus, $t_2=1$. This yields  $(t-1)\omega(v)\ge t_1\omega(v)+t_2=(t-1)\omega(v)+1$, a contradiction.

In summary, all three cases lead to a contradiction, meaning  $f\notin I(\D)^t$. However, $f \in I(\D)^{(t)}$. Therefore, $I(\D)^t \ne I(\D)^{(t)}$, a contradiction. Consequently, every vertex in $V^{+}(\D)$ is a sink, and  $I(\D)=\wb(I(G))$, where $\wb=(\omega(1),\omega(2),\ldots,\omega(n))$. Since $I(\D)^t=I(\D)^{(t)}$,  $I(G)^t=I(G)^{(t)}$ by Lemma \ref{w1}. By Lemma \ref{G}, $G$ contains no odd cycles of length $2s - 1$, where $2 \leqslant s \leqslant t$. Thus,  the theorem follows.
\end{proof}

We now characterize the Cohen-Macaulayness of $I(\D)^t$ for $t\geqslant 3$, thereby improving upon Corollary \ref{cmordinary}.

\begin{Theorem}\label{cmPowers} Let $\D$ be a weighted oriented graph with an underlying graph $G$. Then, the following conditions are equivalent:
\begin{itemize}
\item[(1)] $I(\D)^t$ is Cohen-Macaulay for every $t\geqslant 1$.
\item[(2)] $I(\D)^t$ is Cohen-Macaulay for some $t\geqslant 3$.
\item[(3)] $G$ is a disjoint union of edges.
\end{itemize}
\end{Theorem}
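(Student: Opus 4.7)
The plan is to reduce to results already established in the paper. The implication $(1) \Rightarrow (2)$ is trivial, and $(3) \Rightarrow (1)$ is precisely Corollary \ref{cmordinary}. So the substance lies in proving $(2) \Rightarrow (3)$.

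Assume $I(\D)^t$ is Cohen-Macaulay for some $t \geqslant 3$. Since Cohen-Macaulay ideals are unmixed, we have $I(\D)^t = I(\D)^{(t)}$. By Theorem \ref{equal}, every vertex in $V^+(\D)$ must be a sink; moreover $G$ contains no odd cycle of length $2s-1$ for $2 \leqslant s \leqslant t$, so in particular $G$ is triangle-free.

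The next step is to observe that when every vertex in $V^+(\D)$ is a sink, we have the identity $I(\D) = \wb(I(G))$ with $\wb = (\omega(1),\ldots,\omega(n))$. This relies on the convention that source vertices have weight $1$: for any edge $(i,j) \in E(\D)$, if $\omega(j) \geqslant 2$ then $j$ is a sink and $i$ has an outgoing edge, forcing $\omega(i) = 1$; while if $\omega(j) = 1$ and $\omega(i) \geqslant 2$ then $i$ would be a sink, contradicting that $(i,j)$ is outgoing at $i$. Thus in every case the generator $x_i x_j^{\omega(j)}$ of $I(\D)$ coincides with $\wb(x_i x_j)$. Applying Lemma \ref{w1}(2) gives $I(\D)^t = \wb(I(G)^t)$, and then Lemma \ref{cm2} implies that $I(G)^t$ is Cohen-Macaulay.

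Finally, since $I(G)$ is a squarefree monomial ideal and $I(G)^t$ is Cohen-Macaulay for some $t \geqslant 3$, the Terai--Trung theorem (\cite{TT}, cited in the introduction) forces $I(G)$ to be a complete intersection. For an edge ideal this means the generators $x_i x_j$ are pairwise coprime, so the edges of $G$ are pairwise disjoint; that is, $G$ is a disjoint union of edges, which is condition $(3)$. I do not expect a serious obstacle here since the nontrivial work has already been absorbed by Theorem \ref{equal}, Lemma \ref{cm2}, and the Terai--Trung theorem; the only point requiring care is the verification that $I(\D) = \wb(I(G))$ under the sink hypothesis, which is a short case analysis using the source weight convention.
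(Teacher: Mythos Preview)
Your proof is correct and follows essentially the same route as the paper's: both reduce $(2)\Rightarrow(3)$ via Theorem~\ref{equal} to the identity $I(\D)=\wb(I(G))$, transfer Cohen--Macaulayness to $I(G)^t$ via Lemmas~\ref{w1} and~\ref{cm2}, and then invoke the known squarefree case (the paper cites \cite[Theorem~3.8]{RTY} rather than \cite{TT}, but these agree for edge ideals). Your explicit verification that $I(\D)=\wb(I(G))$ under the sink hypothesis is a welcome addition, and can in fact be shortened: for any $(i,j)\in E(\D)$ the vertex $i$ has an outgoing edge and hence is not a sink, so $\omega(i)=1$ immediately.
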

\begin{proof}
According to Corollary \ref{cmordinary}, statements $(1)$ and $(3)$ are equivalent.  Since $(1)$  implies $(2)$ trivially, it suffices to prove that $(2)$ implies $(3)$. Assume that $I(\D)^t$ is Cohen-Macaulay for some $t\ge 3$, then $I(\D)^{(t)}=I(\D)^t$. By Theorem \ref{equal}, any vertex in $V^{+}(D)$ is a sink. Thus ${\bf w}(I(G))=I(\D)$, where $\wb =(\omega(1),\omega(2),\ldots, \omega(n))$. By Lemma \ref{w1},  $I(G)^t$ is Cohen-Macaulay. Therefore, $I(G)$ is a complete intersection  by \cite[Theorem 3.8]{RTY}. This implies that $G$ is just a disjoint union of edges, as required.
\end{proof}

Finally, we characterize  the Cohen-Macaulayness of $I(\D)^2$.

\begin{Theorem}\label{cmPower2} Let $\D$ be a weighted oriented graph with an underlying graph $G$. Then $I(\D)^2$ is Cohen-Macaulay if and only if the following two conditions hold:
\begin{itemize}
\item[(1)] Every vertex in $V^+(\D)$ is a sink, and
\item[(2)] $G$ is a triangle-free graph in the class $W_2$.
\end{itemize}
\end{Theorem}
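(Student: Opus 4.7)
The plan is to reduce the weighted oriented case to the unweighted case via the weighting operation $\mathbf{w}$ and then invoke the known characterization of the Cohen-Macaulayness of $I(G)^{2}$ for simple graphs, which asserts that $R/I(G)^{2}$ is Cohen-Macaulay if and only if $G$ is triangle-free and in the class $W_{2}$ (a result due to Rinaldo--Terai--Yoshida). Thus the heart of the argument is to pass back and forth between $I(\D)^{2}$ and $I(G)^{2}$ using the tools developed in the previous section.

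For the direction $(\Longrightarrow)$, I would start by observing that if $I(\D)^{2}$ is Cohen-Macaulay then it is unmixed, so $I(\D)^{2} = I(\D)^{(2)}$. Applying Theorem \ref{equal} with $t=2$, every vertex of $V^{+}(\D)$ is a sink and $G$ contains no odd cycle of length $2s-1$ for $s=2$, i.e.\ $G$ is triangle-free. This already gives condition (1) and half of condition (2). Because every vertex in $V^{+}(\D)$ is a sink, $I(\D) = \mathbf{w}(I(G))$ with $\mathbf{w}=(\omega(1),\ldots,\omega(n))$, so by Lemma \ref{w1}(2) we have $I(\D)^{2} = \mathbf{w}(I(G)^{2})$, and Lemma \ref{cm2} then yields that $I(G)^{2}$ is Cohen-Macaulay. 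Invoking the simple-graph characterization forces $G \in W_{2}$, completing (2).

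For the direction $(\Longleftarrow)$, I would run the same chain in reverse. Condition (1) gives $I(\D) = \mathbf{w}(I(G))$, so by Lemma \ref{w1}(2), $I(\D)^{2} = \mathbf{w}(I(G)^{2})$. Condition (2) combined with the simple-graph characterization produces that $I(G)^{2}$ is Cohen-Macaulay, and then Lemma \ref{cm2} transports this to $I(\D)^{2}$.

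The main obstacle is locating and correctly invoking the characterization of Cohen-Macaulayness of $I(G)^{2}$ for simple graphs: the argument above essentially reduces the weighted oriented case to this statement, and everything else is an application of the weighting machinery (Lemmas \ref{w1} and \ref{cm2}) together with Theorem \ref{equal}. A secondary technical point is to verify that the transition $I(\D)^{2}=\mathbf{w}(I(G)^{2})$ is indeed an equality of ideals (not merely containment), which follows from Lemma \ref{w1}(1) applied to both sides. Once these pieces are in place the proof is essentially a short two-way implication with no further combinatorial work required.
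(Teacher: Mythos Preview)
Your proof is correct and follows essentially the same route as the paper: reduce to $I(G)^{2}$ via Theorem \ref{equal} and Lemmas \ref{w1}, \ref{cm2}, then invoke the known simple-graph characterization. The only minor discrepancy is the attribution: the paper cites Hoang--Trung \cite[Theorem 4.4]{HT2} rather than Rinaldo--Terai--Yoshida for the equivalence between $I(G)^{2}$ being Cohen--Macaulay and $G$ being a triangle-free member of $W_{2}$.
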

\begin{proof} Let $\wb=(\omega(1),\omega(2),\ldots,\omega(n))$. If $I(\D)^2$ is Cohen-Macaulay, then $I(\D)^2=I(\D)^{(2)}$. According to  Theorem \ref{equal}, every vertex in $V^{+}(D)$ is a sink. In particular,  $\wb(I(G))=I(\D)$. Therefore, by 	Lemmas \ref{w1} and  \ref{cm2}, $I(G)^2$ is Cohen-Macaulay. Together with \cite[Theorem 4.4]{HT2}, this implies that the condition (2) holds.

Now, assume that two conditions (1) and (2) are true. From the condition (1), we have  $\wb(I(G))=I(\D)$. From the condition (2),  we obtain that $I(G)^2$ is Cohen-Macaulau by
\cite[Theorem 4.4]{HT2}. Therefore, $I(\D)^2$ is Cohen-Macaulay by 	Lemmas \ref{w1} and  \ref{cm2}, and the proof is complete.
\end{proof}

\medskip
\hspace{-6mm} {\bf Acknowledgments}
		
		\vspace{3mm}
		\hspace{-6mm}  The fourth author is  supported by the Natural Science Foundation of Jiangsu Province (No. BK20221353) and the National Natural Science Foundation of China (12471246). The third author is partially supported by Vietnam National Foundation for Science and Technology Development (Grant \#101.04-2024.07). The main part of this
work was done during the third author's visit to Soochow University in Suzhou, China.  He would like to express his gratitude to Soochow University for its warm hospitality.

		\medskip
		\hspace{-6mm} {\bf Data availability statement}
		
		\vspace{3mm}
		\hspace{-6mm}  The data used to support the findings of this study are included within the article.
		
		\medskip
		\hspace{-6mm} {\bf Conflict of interest}
		
		\vspace{3mm}
		\hspace{-6mm}  The authors declare that they have no competing interests.

\end{document}